\documentclass[11pt,twoside]{article}
\usepackage{amsmath, amssymb, amsfonts, amstext, amsthm, textcomp, enumerate}
\usepackage[mathscr]{euscript}
\usepackage{float}
\usepackage{booktabs}
\usepackage{mathtools}
\usepackage{graphicx}
\usepackage{caption}
\usepackage{epstopdf}
\usepackage{longtable}
\usepackage[utf8]{inputenc}
\usepackage{color}
\usepackage{hyperref}
\usepackage{dcolumn}
\usepackage{bm}
\usepackage[english]{babel}
\usepackage{subfigure}
\usepackage{xcolor}
\usepackage{ulem}
\usepackage{titlesec}
\titleformat{\section}
  {\normalfont\Large\bfseries\centering}
  {\thesection.}{1em}{}

\newtheorem{thm}{Theorem}[section]
\newtheorem{lem}[thm]{Lemma}
\newtheorem{cor}[thm]{Corollary}
\newtheorem{pro}[thm]{Proposition}

\newtheorem{rem}[thm]{Remark}
\newtheorem{ex}[thm]{Example}

\newfont{\bb}{msbm10}

\title{Bounds on the Minimum Eigenvalue Modulus for Hadamard Products of $\mathbf{M}$- and $\mathbf{H}$-Matrices and Their Inverses}

\author{
Bharat Pratap Chauhan\thanks{
Department of Mathematics,
Indian Institute of Technology Gandhinagar,
Gujarat 382355, India. (bharat024pratap@gmail.com, bc102@snu.edu.in)
}
\and
Samir Mondal\thanks{
Department of Mathematics and Statistics,
University of Regina,
Regina, SK, Canada. (Corresponding Author : isamirmondal@gmail.com, ma19d750@smail.iitm.ac.in)
}
\and
Sushmitha P\thanks{
Department of Mathematics,
Indian Institute of Technology Patna,
Bihta 801106, India. (sushmitha@iitp.ac.in)
}
}

\date{}
\begin{document}
\maketitle

\begin{abstract}
The quantity $q(A\circ A^{-1})$, the minimum modulus of the eigenvalues of $A\circ A^{-1}$, arises naturally in connection with positive diagonal symmetrizability. For an invertible $\mathbf{M}$-matrix $A$ of order $n$, the classical bounds $\frac{2}{n}\leq q(A\circ A^{-1})\leq 1$ are known. We discuss the sharpness of the lower bound $\frac{2}{n}$ and investigate the converse of a related result involving the Jacobi iteration matrix. In particular, we show that $\rho(J_{A_k})\to 1$ does not, in general, imply $q(A_k\circ A_k^{-1})\to \frac{2}{n}$, and identify a class for which this implication holds.

We then turn to invertible $\mathbf{H}$-matrices, a broader class that contains invertible $\mathbf{M}$-matrices. We show that $A\circ A^{-1}$ is an invertible $\mathbf{H}$-matrix whenever $A$ is an invertible $\mathbf{H}$-matrix. In contrast to the $\mathbf{M}$-matrix setting, $q(A\circ A^{-1})$ can be arbitrarily close to zero. However, replacing $A^{-1}$ by the inverse of the comparison matrix restores the classical lower bound: we prove that $q(A\circ\mathcal{M}(A)^{-1})\geq \frac{2}{n}$ and obtain further bounds involving the Jacobi iteration matrix of $\mathcal{M}(A)$.  Finally, for positive diagonally symmetrizable invertible $\mathbf{H}$-matrices, we establish the upper bound $q(A\circ A^{-1})\leq1$ and, in the irreducible case, characterize when equality occurs.
\end{abstract}

\vspace{0.5cm}
\noindent{\bf 2020 Mathematics Subject Classification:}
15B48, 
15A42. 
\vspace{0.5cm}\\
\textbf{Keywords:} Hadamard product, spectral bounds, $\mathbf{M}$-matrix, $\mathbf{H}$-matrix, positive diagonal symmetrizability, Jacobi iteration matrix.

\section{Introduction and Background}\label{sec:introduction}

The Hadamard product, also known as the Schur product, is one of the fundamental entrywise operations in matrix analysis. Given two matrices \(A=(a_{ij})\) and \(B=(b_{ij})\) of the same size, their Hadamard product is defined by
$
A\circ B=(a_{ij}b_{ij}).
$
The classical Schur product theorem \cite{schur1911} states that the Hadamard product of two positive semidefinite matrices is again positive semidefinite. This fundamental preservation property has motivated the study of Hadamard products for several important classes of structured matrices. Of particular relevance to the present work, Lynn \cite{lynn1964} established that the class of \(\mathbf{H}\)-matrices is closed under the Hadamard product. Hadamard products involving \(\mathbf{M}\)-matrices and inverse \(\mathbf{M}\)-matrices have led to a number of spectral inequalities; see, for example, \cite{kcsmanami,chen2004alower,fiedler1988aninequality,johnson1977ahadamard,song2000onaninequality}.

The class of \(\mathbf{M}\)-matrices plays an important role in matrix analysis, nonnegative matrix theory, numerical linear algebra, and the theory of iterative methods; see, for example, \cite{berman1994nonnegative,Varga2000Matrix,Young1971iterative}. A real matrix \(A\) is an \(\mathbf{M}\)-matrix if it can be written as
\[
A=sI-B,\qquad B\geq0,\quad s\geq\rho(B),
\]
and it is invertible when \(s>\rho(B)\). Here, $\rho(A)$ denotes the spectral radius of $A$, defined as the maximum of the absolute values of the eigenvalues of $A$, and $A \geq 0$ means that all the entries of $A$ are nonnegative.

One of the most important and most relevant properties is that \(A^{-1}\geq0\). In fact, there are almost 50 characterizations of the class of invertible $\mathbf{M}$-matrices \cite{berman1994nonnegative,cps}.  Although the class of \(\mathbf{M}\)-matrices is not closed under the Hadamard product in general, an important special case occurs when an \(\mathbf{M}\)-matrix is combined with the inverse of an \(\mathbf{M}\)-matrix. In particular, if \(A\) is an invertible \(\mathbf{M}\)-matrix, then \(A\circ A^{-1}\) is again an \(\mathbf{M}\)-matrix \cite{johnson1977ahadamard}. This naturally leads to the study of the spectral properties of \(A\circ A^{-1}\).

For a square matrix \(C\), define
$
q(C)=\min\{|\lambda|:\lambda\in\sigma(C)\}.
$
Thus, \(q(C)\) denotes the minimum modulus of the eigenvalues of \(C\). In particular, when \(C\) is an invertible \(\mathbf{M}\)-matrix, by the Perron-Frobenius theorem, \(q(C)\) is a positive eigenvalue of \(C\) and satisfies
$
q(C)=\frac{1}{\rho(C^{-1})}.
$
One of the motivations to study $q(A\circ A^{-1})$ comes from the problem of positive diagonal symmetrization of real matrices \cite{fiedler1985atrace,johnson1977ahadamard}. In this setting, the spectral properties of $A\circ A^{-1}$ provide information about whether $A$ can be transformed into a symmetric matrix by positive diagonal scaling. Further studies on $q(A\circ A^{-1})$ developed by discussing their bounds when $A$ belongs to a particular class of matrices. Fiedler, Johnson, Markham and Neumann \cite{fiedler1985atrace} proved that
$
0<q(A\circ A^{-1})\leq 1
$, when $A$ is an invertible $\mathbf{M}$-matrix.
For an irreducible $\mathbf{M}$-matrix $A$, they further showed that equality in the upper bound holds if and only if $A$ is positive diagonally symmetrizable. Subsequently, Fiedler and Markham \cite{fiedler1988aninequality} established the lower bound \(\frac{1}{n}\) and conjectured the stronger inequality
$
q(A\circ A^{-1})\geq\frac{2}{n}.
$
This conjecture was later proved independently in \cite{chen2004alower,song2000onaninequality,Yong2000proof}. Since then, considerable attention has been devoted to obtaining refinements of this inequality. Bounds involving individual entries of \(A\) and \(A^{-1}\), as well as refinements for various subclasses of \(\mathbf{M}\)-matrices, have also been proposed.

The preceding refinements are generally matrix-dependent or apply to particular subclasses of $\mathbf{M}$-matrices. It is therefore useful to distinguish such refinements from a uniform improvement of the bound $\frac{2}{n}$ over the full class of invertible $\mathbf{M}$-matrices. To complete this aspect of the discussion, we discuss an example for which
\[
\inf\left\{
q(A\circ A^{-1}):
A\text{ is an invertible }n\times n\ \mathbf{M}\text{-matrix}
\right\}
=\frac{2}{n}.
\]

\noindent Hence, the lower bound $\frac{2}{n}$ is sharp in the limiting sense for the full class of invertible $\mathbf{M}$-matrices. Entry-dependent estimates may, however, provide stronger bounds for particular matrices or restricted subclasses.

The extremal behavior associated with this lower bound is also closely connected with the Jacobi iterative method. If
$
A=D-L-U
$
is the usual splitting of an invertible \(\mathbf{M}\)-matrix, then
$
J_A=D^{-1}(L+U)
$
denotes its Jacobi iteration matrix. Xiang \cite{xiang2003hadamard} obtained lower bounds for \(q(A\circ A^{-1})\) in terms of \(\rho(J_A)\) and, in particular, showed that for a sequence \((A_k)\) of \(n\times n\) invertible \(\mathbf{M}\)-matrices, with \(n>2\),
$
q(A_k\circ A_k^{-1})\to\frac{2}{n}$ implies that $\rho(J_{A_k})\to1.
$
This naturally raises the question of whether the converse is true. We show that \(\rho(J_{A_k})\to1\) does not, in general, imply \(q(A_k\circ A_k^{-1})\to \frac{2}{n}\). We then identify a class of matrices for which the converse does hold, thereby obtaining a class of matrices for which the optimal lower bound is approached.

Another major objective of this paper is to investigate analogous questions for the broader class of real \(\mathbf{H}\)-matrices. For \(A=(a_{ij})\), its comparison matrix is defined by
\[
\mathcal{M}(A)=(m_{ij}),\qquad
m_{ii}=|a_{ii}|,\qquad
m_{ij}=-|a_{ij}|\quad(i\neq j).
\]
A matrix \(A\) is called an \(\mathbf{H}\)-matrix if \(\mathcal{M}(A)\) is an \(\mathbf{M}\)-matrix. Thus, the class of \(\mathbf{H}\)-matrices extends the class of \(\mathbf{M}\)-matrices while retaining several useful analytical and numerical properties. These matrices arise naturally in the analysis of iterative methods for systems of linear equations; see, for example, \cite{liu2006algorithmic,huang2007jacobi}. They are also connected with linear complementarity problems and other problems in numerical linear algebra and applied mathematics; see, for example, \cite{alanelli2007iterative,bru2008generalH,fiedler1967diagonal,horn1991matrix,Varga2000Matrix,Young1971iterative}.

In view of the close relationship between \(\mathbf{M}\)-matrices and \(\mathbf{H}\)-matrices, it is natural to ask whether lower bounds analogous to those for \(q(A\circ A^{-1})\) in the \(\mathbf{M}\)-matrix setting remain valid for invertible \(\mathbf{H}\)-matrices, that is those matrices for which $\mathcal{M}(A)$ are invertible $\mathbf{M}$-matrices. The class of invertible $\mathbf{H}$-matrices is denoted by $\mathcal{H}_I$ \cite{bru2008generalH,bru2009schurH}.We show that the situation for this class is substantially different. We show that \(A\circ A^{-1}\) is an \(\mathbf{H}\)-matrix whenever \(A\) is an invertible \(\mathbf{H}\)-matrix. But however there is no positive uniform lower bound, depending only on \(n\), for \(q(A\circ A^{-1})\) over the entire class. 

The comparison matrix provides another natural connection with the \(\mathbf{M}\)-matrix setting. For an invertible \(\mathbf{H}\)-matrix \(A\), we study
$
A\circ\mathcal{M}(A)^{-1}
$
and establish lower bounds for \(q(A\circ\mathcal{M}(A)^{-1})\) analogous to those known for \(q(A\circ A^{-1})\) when \(A\) is an invertible \(\mathbf{M}\)-matrix. We also investigate corresponding estimates involving the Jacobi iteration matrix. These results clarify which lower-bound properties of the \(\mathbf{M}\)-matrix case remain valid, and which fail, when the class is enlarged to \(\mathbf{H}\)-matrices. Finally, we extend the classical upper bound $q(A\circ A^{-1})\leq 1$ from invertible $\mathbf{M}$-matrices to $2\times 2$ invertible $\mathbf{H}$-matrices and to positive diagonally symmetrizable (PDS) invertible $\mathbf{H}$-matrices, with a complete characterization of equality in the irreducible case.

The major contributions of this paper are as follows:
\begin{enumerate}
\item We discuss the sharpness of the lower bound $\frac{2}{n}$ for $q(A\circ A^{-1})$, when $A$ is an invertible $\mathbf{M}$-matrix (Section \ref{m-matrix bound}). We then show that the converse of the known implication for $\mathbf{M}$-matrices $A_k$,
$q(A_k\circ A_k^{-1})\to\frac{2}{n} \Rightarrow \rho(J_{A_k})\to1$
fails in general (Example \ref{conv not true eg}), and identify a class for which the converse holds (Section \ref{optimal matrices}).
\item We discuss some structural properties of $\mathbf{H}$-matrices (Section \ref{h mat results}) and prove that the Hadamard product of an $\mathbf{H}$-matrix and the inverse of an invertible $\mathbf{H}$-matrix is again an $\mathbf{H}$-matrix (Corollary \ref{a circ ainv is h}).
\item For invertible $\mathbf{H}$-matrices, we show that there is no positive dimension-dependent lower bound for $q(A\circ A^{-1})$ in general (Example \ref{ex:Hmatrix_counterexample}), and establish lower bounds for $q(A\circ\mathcal{M}(A)^{-1})$ (Theorem \ref{thm:Hadamard_comparison_inverse}), including a direct consequence involving the Jacobi iteration matrix (Theorem \ref{h matrix jacobi bound}). Similar to the class of $\mathbf{M}$-matrices, we give a sufficient condition for the Jacobi iterative matrix $J_{\mathcal{M}(A)}$ to tend to 1 (Proposition \ref{prop:H-q-zero-jacobi}).

\item We investigate the upper bound for $q(A\circ A^{-1})$ when $A$ is an invertible $\mathbf{H}$-matrix. We prove that $q(A\circ A^{-1})\leq 1$ for $2\times 2$ invertible $\mathbf{H}$-matrices (Proposition~\ref{prop:h-2by2-upper}) and for positive diagonally symmetrizable invertible $\mathbf{H}$-matrices (Theorem~\ref{thm:PDS-H-upper}). We also characterize the equality case for irreducible PDS invertible $\mathbf{H}$-matrices (Theorem~\ref{thm:PDS-H-equality-characterization}).
\end{enumerate}

\section{Hadamard Products Involving $\mathbf{M}$-matrices}
\label{m-matrices section}

In this section, we focus on the classical lower bound $\frac{2}{n}$ for $q(A\circ A^{-1})$, where $A$ is an invertible $\mathbf{M}$-matrix of order $n$. We first revisit the sharpness of this bound and then investigate its connection with the Jacobi iteration matrix, with particular attention to the converse of a known limiting implication.

Several refinements of the lower bound $\frac{2}{n}$ have been obtained by incorporating additional information about the entries or structure of the matrix, particularly since $\frac{2}{n}$ becomes small as $n$ increases. For instance, Li et al.~\cite{Li2007} derived entry-dependent lower bounds for invertible $\mathbf{M}$-matrices. The sharpness of $\frac{2}{n}$ in the limiting sense was already discussed in \cite{wrong proof}. However, as the proof of the Fiedler--Markham conjecture presented there was later found to be incorrect, this sharpness observation appears to have been overlooked in the subsequent literature. We revisit this observation to acknowledge the earlier contribution and complete the discussion of the limiting sharpness of $\frac{2}{n}$.

\subsection*{Sharpness of the lower bound}
\label{m-matrix bound}

Fix $n\geq3$, and let $P\in\mathbb{R}^{n\times n}$ be the cyclic permutation matrix defined by $P(e_i)=e_{i-1}$ for $i=2,\ldots,n$ and $P(e_1)=e_n$. For $t\in(0,1)$, set $A_t=I-tP$. Since $P\geq0$ and $\rho(P)=1$, we have $\rho(tP)=t<1$, so $A_t$ is an invertible $\mathbf{M}$-matrix. Moreover, since $P^n=I$, we have, $(I - tP)\Bigl(\sum_{k=0}^{n-1} t^k P^k\Bigr) = I - t^n I.$ Thus 
\[
A_t^{-1}
=
\frac{1}{1-t^n}\sum_{k=0}^{n-1}t^kP^k,
\qquad
A_t\circ A_t^{-1}
=
\frac{1}{1-t^n}(I-t^2P).
\]
Let $\omega=e^{2\pi i/n}$. Since the eigenvalues of $P$ are $\omega^j$, $j=0,\ldots,n-1$, we obtain
\[
\sigma(A_t\circ A_t^{-1})
=
\left\{
\frac{1-t^2\omega^j}{1-t^n}:j=0,\ldots,n-1
\right\}.
\]
By the reverse triangle inequality,
$
|1-t^2\omega^j|\geq 1-t^2,
$
with equality for $j=0$. Hence
\[
q(A_t\circ A_t^{-1})
=
\frac{1-t^2}{1-t^n}
=
\frac{1+t}{1+t+\cdots+t^{n-1}}
\to \frac{2}{n}
\qquad (t\to 1^-).
\]
Therefore, for every $\varepsilon>0$, there exists $t\in(0,1)$ such that
$
q(A_t\circ A_t^{-1})
<
\frac{2}{n}+\varepsilon.
$
Combining this with the known lower bound
$
q(A\circ A^{-1})>\frac{2}{n}
$
for invertible $\mathbf{M}$-matrices of order $n$, we obtain
\[
\inf\left\{
q(A\circ A^{-1}):
A\text{ is an invertible }n\times n\ \mathbf{M}\text{-matrix}
\right\}
=
\frac{2}{n}.
\]
Hence, the lower bound $\frac{2}{n}$ is sharp in the limiting sense.

\subsection{Bounds Involving the Jacobi Iteration Matrix}\label{jacobi m}

Let $A=D-L-U$, where $D$ is the diagonal part of $A$, and $L$ and $U$ are its strictly lower and strictly upper triangular parts, respectively. The Jacobi iteration matrix associated with $A$ is
$
J_A=D^{-1}(L+U).
$ The convergence of the Jacobi method is determined by $\rho(J_A)$: the method converges when $\rho(J_A)<1$, and its convergence becomes increasingly slow as $\rho(J_A)$ approaches $1$.

For a sequence ${A_k}$ of $n\times n$ invertible $\mathbf{M}$-matrices, it was shown in \cite{xiang2003hadamard} that
$
q(A_k\circ A_k^{-1})\to\frac{2}{n}
$
implies
$
\rho(J_{A_k})\to1.
$ Thus, $\rho(J_{A_k})\to1$ is necessary for $q(A_k\circ A_k^{-1})$ to approach its infimum $\frac{2}{n}$. This naturally raises the question of whether the converse is also true:
$
\rho(J_{A_k})\to1
\quad\Longrightarrow\quad
q(A_k\circ A_k^{-1})\to\frac{2}{n}\,?
$
The following bound from \cite{xiang2003hadamard} is relevant to this question:

$$
q(A\circ A^{-1})\geq
\max\left\{
1-\rho(J_A)^2,\,
\frac{1+\rho(J_A)^{1/(n+2)}}
{1+(n-1)\rho(J_A)^{1/(n+2)}}
\right\}.
$$

As $\rho(J_A)\to1$, the second term on the right-hand side approaches $\frac{2}{n}$. This, however, provides only a lower bound and therefore does not establish the converse implication. We show that the converse is in fact false by constructing a simple counterexample. We then identify a class of invertible $\mathbf{M}$-matrices for which the converse does hold.

\begin{ex}[A Counterexample to the Converse Implication]\label{conv not true eg}
    
{\rm
Consider $n=3$ and let
$$
A_\varepsilon=
\begin{pmatrix}
1 & -\varepsilon & -\varepsilon\\
-\varepsilon & 1 & -\varepsilon\\
-\varepsilon & -\varepsilon & 1
\end{pmatrix},
\qquad 0<\varepsilon<\frac12.
$$

Since $A_\varepsilon=I-B_\varepsilon$, where $B_\varepsilon\geq0$ and $\rho(B_\varepsilon)=2\varepsilon<1$, the matrix $A_\varepsilon$ is an invertible $\mathbf{M}$-matrix. Moreover, since the diagonal part of $A_\varepsilon$ is $I$, its Jacobi iteration matrix is
\[
J_{A_\varepsilon}=
\begin{pmatrix}
0 & \varepsilon & \varepsilon\\
\varepsilon & 0 & \varepsilon\\
\varepsilon & \varepsilon & 0
\end{pmatrix}.
\]
The eigenvalues of $J_{A_\varepsilon}$ are $2\varepsilon$ and $-\varepsilon$, with $-\varepsilon$ of multiplicity two. Hence, $\rho(J_{A_\varepsilon})=2\varepsilon\to1$ as $\varepsilon\uparrow\frac12$.
On the other hand, writing
$A_\varepsilon=(1+\varepsilon)I-\varepsilon\mathbf{1}\mathbf{1}^{\top}$,
the Sherman--Morrison formula gives
$$
A_\varepsilon^{-1}
=\frac{1}{(1+\varepsilon)(1-2\varepsilon)}
\begin{pmatrix}
1-\varepsilon & \varepsilon & \varepsilon\\
\varepsilon & 1-\varepsilon & \varepsilon\\
\varepsilon & \varepsilon & 1-\varepsilon
\end{pmatrix}.
$$

Therefore,
$$
A_\varepsilon\circ A_\varepsilon^{-1}
=\frac{1}{(1+\varepsilon)(1-2\varepsilon)}
\begin{pmatrix}
1-\varepsilon & -\varepsilon^2 & -\varepsilon^2\\
-\varepsilon^2 & 1-\varepsilon & -\varepsilon^2\\
-\varepsilon^2 & -\varepsilon^2 & 1-\varepsilon
\end{pmatrix}.
$$

Its eigenvalues are
$
\frac{1-\varepsilon-2\varepsilon^2}
{(1+\varepsilon)(1-2\varepsilon)}$ and 
$\frac{1-\varepsilon+\varepsilon^2}
{(1+\varepsilon)(1-2\varepsilon)},
$ where the second eigenvalue has multiplicity two. Since
$1-\varepsilon-2\varepsilon^2=(1+\varepsilon)(1-2\varepsilon)$,
the first eigenvalue is $1$. Moreover, the second eigenvalue is greater than $1$ for $0<\varepsilon<\frac12$. Consequently, $q(A_\varepsilon\circ A_\varepsilon^{-1})=1$ for all $0<\varepsilon<\frac12$.
Thus, $\rho(J_{A_\varepsilon})\to1$, but $q(A_\varepsilon\circ A_\varepsilon^{-1})=1\not\to\frac23$.
Hence $\rho(J_{A_k})\to1$ is not sufficient for
$q(A_k\circ A_k^{-1})\to\frac{2}{n}$. The converse implication therefore fails even for symmetric invertible $\mathbf{M}$-matrices.}
\end{ex}
\subsubsection{A Class for which the converse holds}
\label{optimal matrices}

Although the converse fails in general, the cyclic family introduced in Section \ref{m-matrix bound} provides a natural class for which it holds. Fix $n\geq3$, and let $P\in\mathbb{R}^{n\times n}$ be the cyclic permutation matrix defined by $P(e_i)=e_{i-1}$ for $i=2,\ldots,n$ and $P(e_1)=e_n$. For $0<t<1$, let $A_t=I-tP$. As shown in Section \ref{m-matrix bound}, $A_t$ is an invertible $\mathbf{M}$-matrix and $q(A_t\circ A_t^{-1})\to\frac{2}{n}$ as $t\to1^-$. Since the diagonal part of $A_t$ is $I$, its Jacobi iteration matrix is $J_{A_t}=tP$. As $\rho(P)=1$, we also have $\rho(J_{A_t})=t\to1$. Thus, for this cyclic family, $\rho(J_{A_t})\to1$ and $q(A_t\circ A_t^{-1})\to\frac{2}{n}$.

This construction extends naturally to permutation matrices with several cycles. In fact, the cycle structure determines the limiting value completely, as shown in the following theorem.

\begin{thm}\label{thm:permutation-cycles}
{\rm Let $P\in\mathbb{R}^{n\times n}$ be a permutation matrix without fixed points, with cycle lengths $\ell_1,\ldots,\ell_m\geq2$, and set $L=\max_i\ell_i$. For $0<t<1$, define $A_t=I-tP$. Then $A_t$ is an invertible $\mathbf{M}$-matrix, $J_{A_t}=tP$, and $\rho(J_{A_t})=t$. Moreover, $q(A_t\circ A_t^{-1})=\frac{1-t^2}{1-t^L}$, $\quad \lim_{t\to1^-}q(A_t\circ A_t^{-1})=\frac{2}{L}$. In particular, $\lim_{t\to1^-}q(A_t\circ A_t^{-1})=\frac{2}{n}$ if and only if $P$ consists of a single cycle of length $n$.}
\end{thm}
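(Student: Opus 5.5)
The plan is to reduce to the single-cycle computation of Section~\ref{m-matrix bound} by a block-diagonal decomposition. Since $P$ is a permutation matrix, there is a permutation matrix $Q$ with $Q^{\top}PQ=\operatorname{diag}(P_1,\ldots,P_m)$, where each $P_i$ is the $\ell_i\times\ell_i$ cyclic permutation matrix. Simultaneous permutation of rows and columns commutes with inversion and with the Hadamard product: $Q^{\top}(X\circ Y)Q=(Q^{\top}XQ)\circ(Q^{\top}YQ)$. It also preserves spectra. So it suffices to treat $A_t=\operatorname{diag}(I-tP_1,\ldots,I-tP_m)$.

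The structural claims are immediate. Since $tP\geq0$ and $\rho(P)=1$ (permutation matrices are orthogonal and have $1$ as an eigenvalue), we get $\rho(tP)=t<1$, so $A_t$ is an invertible $\mathbf{M}$-matrix. Because $P$ has no fixed points, its diagonal is zero. Hence the diagonal part of $A_t$ is $I$, which gives $J_{A_t}=tP$ and $\rho(J_{A_t})=t$.

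For the spectral formula, inversion and the Hadamard product both act blockwise on block-diagonal matrices. Therefore
\[
A_t\circ A_t^{-1}=\operatorname{diag}\Bigl(\tfrac{1}{1-t^{\ell_1}}(I-t^2P_1),\ldots,\tfrac{1}{1-t^{\ell_m}}(I-t^2P_m)\Bigr).
\]
Here I use the Section~\ref{m-matrix bound} computation for each cycle of length $\ell_i\geq2$. That argument used only $P_i^{\ell_i}=I$ and the fact that $P_i$ has zero diagonal, so it also covers $\ell_i=2$. The spectrum is the union of the block spectra. By the reverse triangle inequality, the minimum modulus on block $i$ is $\frac{1-t^2}{1-t^{\ell_i}}$, attained at the eigenvalue $1$ of $P_i$. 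Hence
\[
q(A_t\circ A_t^{-1})=\min_i\frac{1-t^2}{1-t^{\ell_i}}.
\]
For $0<t<1$ the map $\ell\mapsto 1-t^{\ell}$ is strictly increasing, so the minimum is attained at $\ell_i=L$, giving $\frac{1-t^2}{1-t^L}$.

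Writing this as $\frac{1+t}{1+t+\cdots+t^{L-1}}$ shows that the limit as $t\to1^-$ is $\frac{2}{L}$. For the final equivalence, $\frac{2}{L}=\frac{2}{n}$ holds iff $L=n$. Since $\sum_i\ell_i=n$ and every $\ell_i\geq2$, $L=n$ holds iff $m=1$, that is, iff $P$ is a single $n$-cycle.

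The only step needing care is checking that the permutation similarity is compatible with the Hadamard product. This holds because conjugating by a permutation matrix merely relabels entries, sending the $(i,j)$ entry to the $(\pi(i),\pi(j))$ entry. The rest is routine.
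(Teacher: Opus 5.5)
Your proposal is correct and follows essentially the same route as the paper: you reduce by a permutation similarity (compatible with inversion and the Hadamard product) to block-diagonal form, apply the single-cycle identity $(I-tP_\ell)\circ(I-tP_\ell)^{-1}=\frac{1}{1-t^\ell}(I-t^2P_\ell)$ blockwise, and then use the reverse triangle inequality and the monotonicity of $1-t^\ell$ in $\ell$. The only cosmetic difference is that you get the $\mathbf{M}$-matrix property directly from $A_t=1\cdot I-tP$ with $\rho(tP)=t<1$, whereas the paper uses the Neumann series to obtain $A_t^{-1}\geq0$.
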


\begin{proof}
Since $P\geq0$ and $\rho(tP)=t<1$, the Neumann series
$$
A_t^{-1}=(I-tP)^{-1}=\sum_{k=0}^{\infty}(tP)^k
$$
converges and is nonnegative. Since $A_t$ has nonpositive off-diagonal entries, it follows that $A_t$ is an invertible $\mathbf{M}$-matrix. Moreover, since $P$ has no fixed points, the diagonal part of $A_t$ is $I$, and hence $J_{A_t}=tP$ and $\rho(J_{A_t})=t$. Up to permutation similarity,
$$
P\sim\operatorname{diag}(P_{\ell_1},\ldots,P_{\ell_m}), \qquad A_t\sim\operatorname{diag}(I-tP_{\ell_1},\ldots,I-tP_{\ell_m}),
$$
where $P_{\ell_i}$ denotes a cyclic permutation matrix of order $\ell_i$. If $Q$ is the corresponding permutation matrix, then
$
Q^T(A_t\circ A_t^{-1})Q=(Q^TA_tQ)\circ(Q^TA_t^{-1}Q),
$ so $A_t\circ A_t^{-1}$ is permutation-similar to the corresponding block diagonal matrix. We may therefore compute $q(A_t\circ A_t^{-1})$ blockwise.

For a cycle of length $\ell\geq2$, we have $P_\ell^\ell=I$, and hence

$$
(I-tP_\ell)^{-1}=\frac{1}{1-t^\ell}\sum_{k=0}^{\ell-1}t^kP_\ell^k.
$$

It follows that

$$
(I-tP_\ell)\circ(I-tP_\ell)^{-1}=\frac{1}{1-t^\ell}(I-t^2P_\ell).
$$

Since the eigenvalues of $P_\ell$ are the $\ell$th roots of unity $\omega_\ell^j$, $j=0,\ldots,\ell-1$, we obtain

$$
q\bigl((I-tP_\ell)\circ(I-tP_\ell)^{-1}\bigr)=\min_{0\leq j<\ell}\frac{|1-t^2\omega_\ell^j|}{1-t^\ell}=\frac{1-t^2}{1-t^\ell},
$$

where the minimum occurs at $\omega_\ell^0=1$.

Combining the blocks gives

$$
q(A_t\circ A_t^{-1})=\min_{1\leq i\leq m}\frac{1-t^2}{1-t^{\ell_i}}=\frac{1-t^2}{1-t^L},
$$

because $1-t^\ell$ increases with $\ell$ for $0<t<1$. Therefore,

$$
\lim_{t\to1^-}q(A_t\circ A_t^{-1})=\frac{2}{L}.
$$

\noindent The limit equals $\frac{2}{n}$ if and only if $L=n$, which is equivalent to $P$ consisting of a single cycle of length $n$.
\end{proof}

\begin{rem}
{\rm The assumption that $P$ has no fixed points in Theorem~\ref{thm:permutation-cycles} is needed for the identity $J_{A_t}=tP$. If $P=I$, then $A_t=(1-t)I$, so $J_{A_t}=0$ and $q(A_t\circ A_t^{-1})=1$. If $P$ has both fixed points and nontrivial cycles of lengths $\ell_1,\ldots,\ell_m$, with $L=\max_i\ell_i$, then the fixed points contribute $1\times1$ blocks $[1]$, while the nontrivial cycles give

$$
q(A_t\circ A_t^{-1})
=\min\left\{1,\frac{1-t^2}{1-t^L}\right\}
=\frac{1-t^2}{1-t^L}.
$$

Moreover, the Jacobi iteration matrix has zero blocks corresponding to the fixed points and blocks $tP_{\ell_i}$ corresponding to the nontrivial cycles. Hence $\rho(J_{A_t})=t\to1$ whenever $P$ has at least one nontrivial cycle.

Thus, within the family $A_t=I-tP$, the condition $\rho(J_{A_t})\to1$ alone does not determine the limiting behavior of $q(A_t\circ A_t^{-1})$. If $P$ has at least one nontrivial cycle, then

$$
\lim_{t\to1^-}q(A_t\circ A_t^{-1})=\frac{2}{L},
$$
where $L$ is the length of its largest nontrivial cycle. Consequently, $\rho(J_{A_t})\to1$ leads to the extremal limit $q(A_t\circ A_t^{-1})\to\frac{2}{n}$ precisely when $P$ consists of a single cycle of length $n$.}
\end{rem}

\section{Hadamard Products Involving $\mathbf{H}$-Matrices}
\label{h-matrix section}
The aim of this section is to study Hadamard products involving invertible $\mathbf{H}$-matrices, with particular emphasis on bounds for the minimum modulus of their eigenvalues. We first establish some structural properties of $\mathbf{H}$-matrices and their inverses that will be useful in our analysis. We then examine $A\circ A^{-1}$ and show that, unlike the $\mathbf{M}$-matrix case, no positive lower bound exists, in general. This motivates the consideration of $A\circ\mathcal{M}(A)^{-1}$, for which positive lower bounds can be obtained. We conclude with an upper bound for $q(A\circ A^{-1})$ for symmetric $\mathbf{H}$-matrices.

We shall use the following well-known characterization of invertible $\mathbf{H}$-matrices. A matrix $A=(a_{ij})$ is an invertible $\mathbf{H}$-matrix if and only if there exists a positive vector $d=(d_i)$ such that

$$
|a_{ii}|d_i>
\sum_{\substack{j=1\\j\neq i}}^n |a_{ij}|d_j,
\qquad i=1,\ldots,n.
$$

Equivalently, there exists a positive diagonal matrix
$D=\operatorname{diag}(d_1,\ldots,d_n)$ such that $AD$ is strictly row diagonally dominant \cite{bru2008generalH,bru2009schurH}. 

\subsection{Structural properties}
\label{h mat results}

We begin with some properties of diagonally dominant matrices and their inverses that will be useful in the subsequent analysis. The following result of Fiedler and Markham \cite{fiedler1988aninequality} provides the starting point.

\begin{pro}[Proposition 2, \cite{fiedler1988aninequality}]
{\rm Let $C$ be a diagonally dominant $\mathbf{M}$-matrix, and write $C^{-1}=(\gamma_{ij})$. Then $\gamma_{ii}>\gamma_{ji}$ for all $i\neq j$.}
\end{pro}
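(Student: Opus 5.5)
We must prove: if $C$ is a diagonally dominant $\mathbf{M}$-matrix (we take this to mean row diagonal dominance, $c_{ii}\geq\sum_{j\neq i}|c_{ij}|$, with $C$ invertible so that $C^{-1}$ exists), and $C^{-1}=(\gamma_{ij})$, then $\gamma_{ii}>\gamma_{ji}$ for $i\neq j$. The plan is to work column by column with the identity $CC^{-1}=I$ and a maximum-principle argument. Fix a column index $i$ and set $x=C^{-1}e_i$, so $x_k=\gamma_{ki}$ and $Cx=e_i$. Since $C$ is an invertible $\mathbf{M}$-matrix, $C^{-1}\geq0$, hence $x\geq0$. We will also need $\gamma_{ii}>0$: the diagonal of the inverse of an invertible $\mathbf{M}$-matrix is positive, because $x\neq 0$ and the $i$th equation below forces it.

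\textbf{Step 1 (non-strict inequality).} Let $k$ be an index with $x_k=\max_l x_l$, and suppose $k\neq i$. Row $k$ of $Cx=e_i$ reads $c_{kk}x_k=\sum_{l\neq k}|c_{kl}|x_l$. Bounding the right side gives
\[
\sum_{l\neq k}|c_{kl}|x_l\leq\Bigl(\sum_{l\neq k}|c_{kl}|\Bigr)x_k\leq c_{kk}x_k .
\]
So equality holds throughout. For the row $i$ itself, $c_{ii}x_i-\sum_{l\neq i}|c_{il}|x_l=1$. If the maximum were attained only off $i$, with $x_i<x_k$, we would like to derive a contradiction. The cleanest route is to consider the set $S=\{l: x_l=\max x\}$ and suppose $i\notin S$. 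For each $k\in S$, the equality chain forces $x_l=x_k$ whenever $c_{kl}\neq0$; that is, $S$ has no outgoing edges in the digraph of $C$ and every row in $S$ has zero row sum. Then the principal submatrix $C[S]$ has zero row sums and nonpositive off-diagonal entries, so $C[S]\mathbf 1=0$ and $C[S]$ is singular. But a principal submatrix of an invertible $\mathbf{M}$-matrix is an invertible $\mathbf{M}$-matrix, which is a contradiction. Hence $i\in S$, i.e.\ $\gamma_{ii}\geq\gamma_{ji}$ for all $j$.

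\textbf{Step 2 (strictness).} This is the main obstacle. Suppose $x_j=x_i$ for some $j\neq i$, and again let $S$ be the set of maximizers, so now $i\in S$. Consider $T=S\setminus\{i\}\ni j$. For each $k\in T$ the same equality argument applies, since row $k$ has right-hand side $0$, so every neighbour of $k$ lies in $S$ and row $k$ has zero row sum. I expect the key point to be whether the rows in $T$ can have $c_{ki}\neq0$. They can, so $T$ is not closed, and the naive argument does not immediately yield singularity. I would handle this with a vector computation. Row $k\in T$ gives $c_{kk}x_k=\sum_{l\neq k}|c_{kl}|x_l$ with all contributing $x_l=M$, where $M=\max x$. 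Now look at row $i$: $c_{ii}M-\sum_{l\neq i}|c_{il}|x_l=1$. This does not contradict directly, since row $i$ can be fine.

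So I expect strictness to actually require more. A likely counterexample is a $C$ whose row $j$ is $(-c,\,c)$ in columns $i,j$. For instance $C=\begin{pmatrix}2&-1\\-1&1\end{pmatrix}$ has $C^{-1}=\begin{pmatrix}1&1\\1&2\end{pmatrix}$, so $\gamma_{11}=\gamma_{21}$. Thus I would check whether the cited hypothesis means \emph{strict} diagonal dominance, and I would adopt that reading. Under strict row dominance, each row $k\in T$ has $\sum_{l\neq k}|c_{kl}|<c_{kk}$, so the chain for row $k$ gives $c_{kk}x_k=\sum_{l\neq k}|c_{kl}|x_l\leq(\sum_{l\neq k}|c_{kl}|)M<c_{kk}M$, using $M>0$. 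This contradicts $x_k=M$, so $T$ is empty and $\gamma_{ii}>\gamma_{ji}$ for all $j\neq i$. Under strict dominance Step 1 also follows by the same one-line estimate, since any maximizer $k\neq i$ is impossible. So the final proof will be this single maximum-principle estimate applied to the columns of $C^{-1}$.
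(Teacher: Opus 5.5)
Your final argument, under strict row dominance, is correct. It is exactly the maximum-principle estimate the paper uses for the strict case of Proposition~\ref{lemma:RSDD-inverse-column}, of which this cited result is the special case $C^{-1}\geq 0$; the paper itself gives no separate proof of the citation. Your example $C=\bigl(\begin{smallmatrix}2&-1\\-1&1\end{smallmatrix}\bigr)$, with $\gamma_{11}=\gamma_{21}=1$, rightly shows that strictness needs strict dominance. This agrees with the paper's own generalization, which under weak dominance asserts only $|\alpha_{ii}|\geq|\alpha_{ji}|$; the paper reaches that case by a diagonal perturbation and a limiting argument, whereas your Step~1 gets it directly from the singularity of the principal submatrix $C[S]$.
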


We next extend this property to general invertible row diagonally dominant matrices.

\begin{pro}\label{lemma:RSDD-inverse-column}
{\rm Let $A=(a_{ij})$ be an invertible row diagonally dominant matrix, and let $A^{-1}=(\alpha_{ij})$. Then
\begin{equation}\label{equation:RSDD1}
|\alpha_{ii}|\geq|\alpha_{ji}|,\qquad i\neq j.
\end{equation}
Moreover, if $A$ is strictly row diagonally dominant, then the inequalities in \eqref{equation:RSDD1} are strict.}
\end{pro}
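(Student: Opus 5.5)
Fix a column index $i$ and use the identity $AA^{-1}=I$ restricted to the $i$th column of $A^{-1}$. Write $x=(x_1,\ldots,x_n)^{\top}$ for this column, so $x_k=\alpha_{ki}$ and $Ax=e_i$. Since $A$ is invertible, $x\neq0$. Choose an index $p$ with $|x_p|=\max_k|x_k|>0$. The whole plan is to show that this maximal modulus is attained at $p=i$ (and, in the strict case, only there).

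Suppose $p\neq i$. Then the $p$th row of $Ax=e_i$ reads $a_{pp}x_p=-\sum_{k\neq p}a_{pk}x_k$, so
\[
|a_{pp}|\,|x_p|\le\sum_{k\neq p}|a_{pk}|\,|x_k|\le\Bigl(\sum_{k\neq p}|a_{pk}|\Bigr)|x_p|.
\]
In the strictly row diagonally dominant case this contradicts $|a_{pp}|>\sum_{k\neq p}|a_{pk}|$ after dividing by $|x_p|>0$. Hence every index of maximal modulus equals $i$, i.e.\ $|\alpha_{ii}|>|\alpha_{ji}|$ for all $j\neq i$. This gives the strict version of \eqref{equation:RSDD1}.

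For the weak (non-strict) case, I would use a perturbation argument. For $\varepsilon>0$ small, set $A_\varepsilon=A+\varepsilon I'$, where $I'=\operatorname{diag}(\operatorname{sgn}a_{kk})$ with the convention $\operatorname{sgn}0=1$. Then $|(A_\varepsilon)_{kk}|=|a_{kk}|+\varepsilon$, so $A_\varepsilon$ is strictly row diagonally dominant. Since $A$ is invertible, $A_\varepsilon$ is invertible for small $\varepsilon$ and $A_\varepsilon^{-1}\to A^{-1}$. Applying the strict inequality to $A_\varepsilon^{-1}$ and letting $\varepsilon\to0$ yields $|\alpha_{ii}|\ge|\alpha_{ji}|$.

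The only delicate point is this limiting step: it needs the perturbation to preserve (strict) diagonal dominance and invertibility simultaneously, and continuity of inversion then handles the rest. A direct maximum argument in the weak case does not work, because equality can propagate across rows, which is why the perturbation is used instead.
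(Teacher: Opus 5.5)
Your proposal is correct and follows essentially the same route as the paper: a maximum-modulus argument on a column of $A^{-1}$ via $AA^{-1}=I$ for the strict case, then the perturbation $A+\varepsilon\operatorname{diag}(\operatorname{sgn}a_{kk})$ with continuity of inversion for the weak case. The only minor difference is that you use the convention $\operatorname{sgn}0=1$, whereas the paper first shows $a_{ii}\neq0$ from invertibility; both work.
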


\begin{proof}
We first consider the strictly row diagonally dominant case. Fix $r\in\{1,\ldots,n\}$ and set $\beta_r=\displaystyle\max_{1\leq k\leq n}|\alpha_{kr}|$. For $i\neq r$, the $(i,r)$ entry of $AA^{-1}=I$ gives
$
a_{ii}\alpha_{ir}=-\sum_{j\neq i}a_{ij}\alpha_{jr}.
$
Hence,
\[
|a_{ii}|\,|\alpha_{ir}|
\leq\sum_{j\neq i}|a_{ij}|\,|\alpha_{jr}|
\leq\beta_r\sum_{j\neq i}|a_{ij}|
<\beta_r|a_{ii}|.
\]
Since $a_{ii}\neq0$, we obtain $|\alpha_{ir}|<\beta_r$ for every $i\neq r$. Thus, $\beta_r$ can only be attained at $\alpha_{rr}$, and consequently $|\alpha_{ir}|<|\alpha_{rr}|$ for all $i\neq r$.

Now suppose that $A$ is invertible and row diagonally dominant. Notice that $a_{ii}\neq0$ for every $i$; otherwise, row diagonal dominance would force the entire $i$th row of $A$ to be zero, contradicting invertibility. Let
$
S=\operatorname{diag}\bigl(\operatorname{sgn}(a_{11}),\ldots,\operatorname{sgn}(a_{nn})\bigr)
$
and, for $\varepsilon>0$, set $A_\varepsilon=A+\varepsilon S$. Then $A_\varepsilon$ is strictly row diagonally dominant, since
$
|(A_\varepsilon)_{ii}|=|a_{ii}|+\varepsilon>\sum_{j\neq i}|a_{ij}|.
$
By the first part, $|(A_\varepsilon^{-1})_{ir}|<|(A_\varepsilon^{-1})_{rr}|$ for all $i\neq r$. Since $A_\varepsilon\to A$ as $\varepsilon\to0^+$ and $A$ is invertible, $A_\varepsilon^{-1}\to A^{-1}$. Therefore,
$
(A_\varepsilon^{-1})_{ir}\to\alpha_{ir},$
$(A_\varepsilon^{-1})_{rr}\to\alpha_{rr}.
$
Passing to the limit gives $|\alpha_{ir}|\leq|\alpha_{rr}|$ for all $i\neq r$. Since $r$ was arbitrary, \eqref{equation:RSDD1} follows.
\end{proof}

\begin{rem}
{\rm \begin{enumerate}[(a)]
\item An analogous statement holds for column diagonally dominant matrices, with the roles of rows and columns interchanged.

\item The converse of Proposition~\ref{lemma:RSDD-inverse-column} holds for matrices of order $2$, but fails in general for matrices of higher order. In particular, the following example gives an invertible $3\times3$ matrix that is not row diagonally dominant, although its inverse satisfies \eqref{equation:RSDD1}.
\end{enumerate}}
\end{rem}

\begin{ex}\label{eg1}
{\rm Let
$
A=
\begin{pmatrix}
\ 14 & 9 & -8\\
-8 & 18 & -7\\
\ \ 9 & 0 & \ 18
\end{pmatrix}.
$
Clearly, $A$ is not row diagonally dominant. However,
$
A^{-1}
=
\frac{1}{81}
\begin{pmatrix}
4 & -2 & 1\\
1 & 4 & 2\\
-2 & 1 & 4
\end{pmatrix},
$
and $A^{-1}$ satisfies the inequalities in \eqref{equation:RSDD1}. Thus, the converse of Proposition~\ref{lemma:RSDD-inverse-column} does not hold in general.}
\end{ex}

The characterization of invertible $\mathbf{H}$-matrices immediately implies that, for an invertible $\mathbf{H}$-matrix $A=(a_{ij})$, there exists a positive vector $d>0$ such that
$
|a_{ii}|d_i>|a_{ij}|d_j
\quad \text{for all } i\neq j.
$
The following result shows that an analogous property holds for the inverse and will be useful in establishing Theorem~\ref{a hadamard b inv is h} in the next subsection.

\begin{pro}\label{theorem:inverse_H_row_S_Diagonal_dominant}
{\rm Let $A\in\mathbb{R}^{n\times n}$ be an invertible $\mathbf{H}$-matrix with
$A^{-1}=(\alpha_{ij})$. Then there exists a positive vector $\tilde d>0$ such that
$
|\alpha_{ii}|\tilde d_i>|\alpha_{ij}|\tilde d_j
$ {for all } $i\neq j.
$}
\end{pro}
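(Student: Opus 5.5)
We use the diagonal scaling from the characterization of invertible $\mathbf{H}$-matrices and then apply Proposition~\ref{lemma:RSDD-inverse-column} to the scaled matrix. Let $d>0$ be a positive vector with $|a_{ii}|d_i>\sum_{j\neq i}|a_{ij}|d_j$ for all $i$, and put $D=\operatorname{diag}(d_1,\ldots,d_n)$. Then $AD$ is strictly row diagonally dominant and invertible. Its inverse is $(AD)^{-1}=D^{-1}A^{-1}$, with entries
\[
\bigl((AD)^{-1}\bigr)_{ij}=\frac{\alpha_{ij}}{d_i}.
\]

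Proposition~\ref{lemma:RSDD-inverse-column} in the strict case says that in each column $r$ of $(AD)^{-1}$ the diagonal entry strictly dominates the others in modulus. Here this reads $|\alpha_{rr}|/d_r>|\alpha_{ir}|/d_i$ for $i\neq r$. This is a column-type inequality, whereas the statement is row-type: it asks for $|\alpha_{ii}|\tilde d_i>|\alpha_{ij}|\tilde d_j$, which fixes the row $i$ and varies the column index $j$. So the row scaling obtained directly is not yet of the required form.

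To get the row version, we apply the same argument to $A^{T}$. Since $\mathcal{M}(A^T)=\mathcal{M}(A)^T$ and transposes of invertible $\mathbf{M}$-matrices are invertible $\mathbf{M}$-matrices, $A^T$ is an invertible $\mathbf{H}$-matrix. Hence there is $e>0$ with $A^TE$ strictly row diagonally dominant, where $E=\operatorname{diag}(e)$; equivalently, $EA$ is strictly column diagonally dominant. By Proposition~\ref{lemma:RSDD-inverse-column} applied to $A^TE$ (or by part (a) of the subsequent remark), each column of $(A^TE)^{-1}=E^{-1}(A^{-1})^T$ has a strictly dominant diagonal entry. Transposing, each row of $(EA)^{-1}=A^{-1}E^{-1}$ has a strictly dominant diagonal entry. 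The entries of $A^{-1}E^{-1}$ are $\alpha_{ij}/e_j$, so this gives
\[
\frac{|\alpha_{ii}|}{e_i}>\frac{|\alpha_{ij}|}{e_j},\qquad i\neq j.
\]
Setting $\tilde d_i=1/e_i>0$ yields $|\alpha_{ii}|\tilde d_i>|\alpha_{ij}|\tilde d_j$ for all $i\neq j$, as required.

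The only real obstacle is matching the orientation, row versus column, and this is resolved by passing to $A^T$. Apart from that, we only need the routine checks that $a_{ii}\neq0$, which follows from the strict inequality for $AD$, and that diagonal scaling preserves invertibility.
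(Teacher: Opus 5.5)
Your proposal is correct and is essentially the paper's proof. You pass to $A^T$ to get a positive diagonal $E$ with $EA$ strictly column diagonally dominant, apply the column form of Proposition~\ref{lemma:RSDD-inverse-column} to $(EA)^{-1}=A^{-1}E^{-1}$, and set $\tilde d_i=1/e_i$; your opening remark on why the direct scaling $AD$ yields only column-type inequalities is a useful motivation but not needed for the argument.
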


\begin{proof}
Since $A^T$ is an invertible $\mathbf{H}$-matrix, there exists a positive diagonal matrix
$E=\operatorname{diag}(e_{11},\ldots,e_{nn})$ such that $A^TE$ is strictly row diagonally dominant. Equivalently, $EA$ is strictly column diagonally dominant. By the column analogue of Proposition~\ref{lemma:RSDD-inverse-column}, and since $(EA)^{-1}=A^{-1}E^{-1}$, we have
\[
|\alpha_{ii}e_{ii}^{-1}|>|\alpha_{ij}e_{jj}^{-1}|,
\qquad i\neq j.
\]
Setting $\tilde d_i=e_{ii}^{-1}$ gives
$|\alpha_{ii}|\tilde d_i>|\alpha_{ij}|\tilde d_j$ for all $i\neq j$, as required.
\end{proof}

\begin{rem}\label{h matrix remark}
{\rm The converse of Proposition~\ref{theorem:inverse_H_row_S_Diagonal_dominant} does not hold in general. Indeed, let
\[
A=
\begin{pmatrix}
\frac{5}{3} & \frac{4}{3} & 1\\
\frac{4}{3} & \frac{5}{3} & 1\\
1 & 1 & 1
\end{pmatrix},
\qquad
A^{-1}=
\begin{pmatrix}
\ \ 2 & -1 & -1\\
-1 & \ \ 2 & -1\\
-1 & -1 & \ \ 3
\end{pmatrix}.
\]
Then $A^{-1}=(\alpha_{ij})$ satisfies
$|\alpha_{ii}|\tilde d_i>|\alpha_{ij}|\tilde d_j$ for all $i\neq j$, with
$\tilde d=(1,1,1)^{\mathsf T}$. However, $A$ is not an $\mathbf{H}$-matrix, since its comparison matrix is not an $\mathbf{M}$-matrix.}
\end{rem}

\subsection{Hadamard products and spectral bounds}
\label{h hadamard bounds}

In this subsection, we study Hadamard products involving invertible $\mathbf{H}$-matrices, with particular emphasis on their spectral properties. We first establish that $A\circ A^{-1}$ preserves the $\mathbf{H}$-matrix structure. We then investigate lower bounds for $q(A\circ A^{-1})$ and $q(A\circ\mathcal{M}(A)^{-1})$, including bounds involving the Jacobi iteration matrix. Finally, we consider upper bounds for $q(A\circ A^{-1})$ and formulate a related open problem.

We begin by recalling the following well-known closure property of $\mathbf{H}$-matrices under the Hadamard product.

\begin{thm}[\cite{lynn1964}]
{\rm Let $A,B\in\mathbb{R}^{n\times n}$ be $\mathbf{H}$-matrices. Then $A\circ B$ is an $\mathbf{H}$-matrix.}
\end{thm}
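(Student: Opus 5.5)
The idea is to pass to comparison matrices and reduce the claim to a statement about Hadamard products of \(\mathbf{M}\)-matrices. Since \(\mathcal{M}(A\circ B)=\mathcal{M}(A)\circ\mathcal{M}(B)\) holds entrywise, it suffices to show that the Hadamard product of two \(\mathbf{M}\)-matrices \(C=\mathcal{M}(A)\) and \(E=\mathcal{M}(B)\) is again an \(\mathbf{M}\)-matrix. The identity is immediate: the diagonal entries are \(|a_{ii}||b_{ii}|\), and the off-diagonal entries are \((-|a_{ij}|)(-|b_{ij}|)\) with the sign flipped, i.e.\ \(-|a_{ij}b_{ij}|\).

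I would first handle the invertible case. Assume \(C\) and \(E\) are invertible \(\mathbf{M}\)-matrices. By the characterization recalled at the start of this section, there are positive vectors \(d\) and \(f\) with
\[
c_{ii}d_i>\sum_{j\ne i}|c_{ij}|d_j \quad\text{and}\quad e_{ii}f_i>\sum_{j\ne i}|e_{ij}|f_j .
\]
Put \(g_i=d_if_i\). Then
\[
\sum_{j\neq i}|c_{ij}e_{ij}|g_j\le \Bigl(\sum_{j\ne i}|c_{ij}|d_j\Bigr)\Bigl(\max_{j\ne i}|e_{ij}|f_j\Bigr)\le \Bigl(\sum_{j\ne i}|c_{ij}|d_j\Bigr)\Bigl(\sum_{j\ne i}|e_{ij}|f_j\Bigr) < c_{ii}d_i\,e_{ii}f_i=c_{ii}e_{ii}g_i .
\]
The strict inequality is valid because both sums are strictly dominated by positive quantities. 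So \(g\) is a positive scaling vector making \(\mathcal{M}(A\circ B)\) strictly row diagonally dominant, and \(A\circ B\) is an invertible \(\mathbf{H}\)-matrix.

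Next, the general (possibly singular) case. An \(\mathbf{M}\)-matrix \(C=sI-P\) with \(P\ge 0\) and \(s\ge\rho(P)\) is the limit of the invertible \(\mathbf{M}\)-matrices \(C+\varepsilon I\) as \(\varepsilon\to0^+\). Replace \(C\) and \(E\) by \(C+\varepsilon I\) and \(E+\varepsilon I\), which are comparison matrices of \(\mathbf{H}\)-matrices with diagonals enlarged in modulus. The previous step shows that \((C+\varepsilon I)\circ(E+\varepsilon I)\) is an invertible \(\mathbf{M}\)-matrix. This product has nonpositive off-diagonal entries and converges to \(C\circ E\). Since the class of \(\mathbf{M}\)-matrices is closed, \(C\circ E\) is an \(\mathbf{M}\)-matrix. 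To see closedness: a \(Z\)-matrix \(Z\) is an \(\mathbf{M}\)-matrix iff every real eigenvalue is nonnegative, equivalently iff \(s-\rho(sI-Z)\ge0\) for a large fixed \(s\), and this condition is preserved under limits by continuity of the spectral radius.

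The main obstacle I expect is this limiting step, in particular choosing the right definition of \(\mathbf{M}\)-matrix so that closedness is clean. The \(sI-B\) description with a common large \(s\) chosen along the sequence handles it: take \(s\) larger than all diagonal entries for \(\varepsilon\le 1\), then \(B_\varepsilon=sI-Z_\varepsilon\ge0\) and \(\rho(B_\varepsilon)\le s\) pass to the limit. If \(A\) or \(B\) has a zero diagonal entry, the approximation still works, because adding \(\varepsilon\) only to the comparison diagonal is all that is needed.
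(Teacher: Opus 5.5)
Your argument is sound in substance, but the reduction is misstated and has to be repaired. The identity $\mathcal{M}(A\circ B)=\mathcal{M}(A)\circ\mathcal{M}(B)$ is false. The off-diagonal entries of the right-hand side are $(-|a_{ij}|)(-|b_{ij}|)=+|a_{ij}b_{ij}|$, and ``flipping the sign'' is not part of the Hadamard product.

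Accordingly, the statement you reduce to is false: the Hadamard product of two $\mathbf{M}$-matrices need not be an $\mathbf{M}$-matrix. For $C=E=\begin{pmatrix}2&-1\\-1&2\end{pmatrix}$ one gets $C\circ E=\begin{pmatrix}4&1\\1&4\end{pmatrix}$, which is not even a $Z$-matrix.

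The correct identity is $\mathcal{M}(A\circ B)=\mathcal{M}(A)\circ|B|=\mathcal{M}\bigl(\mathcal{M}(A)\circ\mathcal{M}(B)\bigr)$, which is the Fan product of $\mathcal{M}(A)$ and $\mathcal{M}(B)$. So what you must show is that $\mathcal{M}(C\circ E)$ is an $\mathbf{M}$-matrix. Your inequality chain involves only $|c_{ij}e_{ij}|$, so it proves exactly this in the invertible case.

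In the limiting step you likewise have to replace $(C+\varepsilon I)\circ(E+\varepsilon I)$, whose off-diagonal entries are nonnegative, by its comparison matrix. That matrix is a $Z$-matrix, is an invertible $\mathbf{M}$-matrix by your first step, and converges to $\mathcal{M}(A\circ B)$ as $\varepsilon\to0^+$. With this rewording your closedness argument (fixed $s$, continuity of $\rho$) goes through, and the proof is complete.

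The paper itself gives no proof of this result; it is quoted from Lynn. The nearest argument in the paper is the proof of Theorem~\ref{a hadamard b inv is h}, and your invertible case uses the same device. You multiply the two scaling vectors entrywise, then control one factor by its scaled off-diagonal row sum and the other by its largest scaled off-diagonal entry.

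What your route adds is the $\varepsilon I$ perturbation together with closedness of the $\mathbf{M}$-matrix class. This covers $\mathbf{H}$-matrices whose comparison matrix is a singular $\mathbf{M}$-matrix, which the paper's definition ($s\geq\rho(B)$) admits. A strictly dominating scaling vector exists only when the comparison matrix is nonsingular; for example, $\begin{pmatrix}1&-1\\-1&1\end{pmatrix}$ has none. So a pure scaling argument, like the one used for a general $\mathbf{H}$-matrix $A$ in Theorem~\ref{a hadamard b inv is h}, does not reach that case on its own.
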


The preceding result cannot be applied directly to $A\circ A^{-1}$, since $A^{-1}$ need not be an $\mathbf{H}$-matrix. Nevertheless, the structural properties established in the previous subsection allow us to obtain the desired conclusion. In fact, we prove the following more general result. 
\begin{thm}\label{a hadamard b inv is h} {\rm Let $A\in\mathbb{R}^{n\times n}$ be an $\mathbf{H}$-matrix and let $B\in\mathbb{R}^{n\times n}$ be an invertible $\mathbf{H}$-matrix. Then $A\circ B^{-1}$ is an $\mathbf{H}$-matrix.} 
\end{thm}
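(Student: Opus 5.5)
The plan is to show directly that the comparison matrix $\mathcal{M}(A\circ B^{-1})$ is an $\mathbf{M}$-matrix. We do this by comparing a positive diagonal column scaling of it with a positive diagonal row scaling of $\mathcal{M}(A)$. Write $B^{-1}=(\beta_{ij})$. By Proposition~\ref{theorem:inverse_H_row_S_Diagonal_dominant}, applied to the invertible $\mathbf{H}$-matrix $B$, there is a positive vector $\tilde d$ with $|\beta_{ii}|\tilde d_i>|\beta_{ij}|\tilde d_j$ for all $i\neq j$. For $n\geq2$ this forces $\beta_{ii}\neq0$ for every $i$, and the case $n=1$ is trivial. Set $\tilde D=\operatorname{diag}(\tilde d_1,\ldots,\tilde d_n)$ and $\Delta=\operatorname{diag}(|\beta_{11}|\tilde d_1,\ldots,|\beta_{nn}|\tilde d_n)$, which is a positive diagonal matrix.

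The key entrywise comparison is between $C:=\mathcal{M}(A\circ B^{-1})\tilde D$ and $\Delta\,\mathcal{M}(A)$.

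\textbf{Diagonal entries.} Both have $(i,i)$ entry equal to $|a_{ii}||\beta_{ii}|\tilde d_i$.

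\textbf{Off-diagonal entries.} For $i\neq j$,
\[
C_{ij}=-|a_{ij}||\beta_{ij}|\tilde d_j\;\geq\;-|a_{ij}||\beta_{ii}|\tilde d_i=(\Delta\,\mathcal{M}(A))_{ij}.
\]

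Hence $C$ is a $Z$-matrix with $C\geq\Delta\,\mathcal{M}(A)$ entrywise and with the same diagonal. Since $\mathcal{M}(A)$ is an $\mathbf{M}$-matrix and $\Delta$ is positive diagonal, $\Delta\,\mathcal{M}(A)$ is an $\mathbf{M}$-matrix.

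Next we invoke the standard fact that a $Z$-matrix dominating an $\mathbf{M}$-matrix entrywise is itself an $\mathbf{M}$-matrix. Once $C$ is known to be an $\mathbf{M}$-matrix, $\mathcal{M}(A\circ B^{-1})=C\tilde D^{-1}$ is also one. This holds because column scaling by a positive diagonal matrix preserves $\mathbf{M}$-matrices: it is a similarity $\tilde D^{-1}(\tilde D C\tilde D^{-1})$ combined with a positive diagonal row scaling. Equivalently, for the singular case one can use the $\varepsilon$-argument below together with the characterization by positive vectors. This gives that $A\circ B^{-1}$ is an $\mathbf{H}$-matrix.

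The main obstacle is that $A$ is only assumed to be an $\mathbf{H}$-matrix, so $\mathcal{M}(A)$ may be singular. The clean characterization via a positive vector $d$ with strict diagonal dominance then does not apply directly. I would handle this by perturbation.

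1. For $\varepsilon>0$, the matrix $\Delta\,\mathcal{M}(A)+\varepsilon I$ is an invertible $\mathbf{M}$-matrix, since $\Delta\,\mathcal{M}(A)=sI-P$ with $P\geq0$ and $s\geq\rho(P)$.
2. Write $C+\varepsilon I=sI-P'$ with $0\leq P'\leq P$. Monotonicity of the spectral radius gives $\rho(P')\leq\rho(P)\leq s<s+\varepsilon$, so $C+\varepsilon I$ is an invertible $\mathbf{M}$-matrix for every $\varepsilon>0$.
3. Equivalently, one can read the domination directly: $C=sI-P'$ with $\rho(P')\leq s$. This already shows $C$ is an $\mathbf{M}$-matrix, with no limit needed.
4. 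Scaling by $\tilde D^{-1}$ on the right gives $\mathcal{M}(A\circ B^{-1})=\tilde D^{-1}\bigl(\tilde D C\tilde D^{-1}\bigr)$. This is a positive diagonal row scaling of a matrix similar to $C$, namely $\tilde D^{-1}(s\tilde D-\tilde D P'\tilde D^{-1})$. To keep the argument self-contained, I would verify this last closure property (that a positive diagonal multiple of an $\mathbf{M}$-matrix is an $\mathbf{M}$-matrix) via the same $\varepsilon$-perturbation and inverse-nonnegativity.
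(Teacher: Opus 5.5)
Your proof is correct. It uses the same key input as the paper, Proposition~\ref{theorem:inverse_H_row_S_Diagonal_dominant} with the inequality $|\beta_{ij}|\tilde d_j<|\beta_{ii}|\tilde d_i$, but packages it differently. The paper works with a single vector. It picks $d>0$ with $|a_{ii}|d_i>\sum_{j\neq i}|a_{ij}|d_j$, sets $\hat d_i=d_i\tilde d_i$, and checks in one chain of inequalities that $A\circ B^{-1}$ is generalized strictly row diagonally dominant. In your notation this is exactly $\mathcal{M}(A\circ B^{-1})\hat d=Cd\geq\Delta\,\mathcal{M}(A)d>0$, i.e.\ your comparison $C\geq\Delta\,\mathcal{M}(A)$ tested against one vector.

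Your matrix-level version buys real generality. The paper's first step uses strict generalized diagonal dominance of $A$, which characterizes only \emph{invertible} $\mathbf{H}$-matrices. So its proof tacitly assumes $\mathcal{M}(A)$ is nonsingular; it cannot even start for $A=\begin{pmatrix}1&1\\1&1\end{pmatrix}$. The statement, under the paper's definition of $\mathbf{H}$-matrix, does allow singular $\mathcal{M}(A)$, and your comparison $0\leq P'\leq P$ covers that case.

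The price is that you rely on standard but nontrivial facts: Perron--Frobenius monotonicity, and closure of possibly singular $\mathbf{M}$-matrices under positive diagonal row and column scaling. The paper's argument is entirely elementary. It also directly gives the stronger conclusion that $A\circ B^{-1}$ is an \emph{invertible} $\mathbf{H}$-matrix when $A$ is invertible, and this is what is used later, e.g.\ in Theorem~\ref{thm:PDS-H-upper}. Your route gives this for free: nonsingular $\mathcal{M}(A)$ means $\rho(P)<s$, hence $\rho(P')<s$. You should say so explicitly.

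Two small clean-ups:
\begin{enumerate}
\item Steps~1--2 already presuppose that $\Delta\,\mathcal{M}(A)$ is an $\mathbf{M}$-matrix, and step~3 makes them redundant.
\item In step~4 the matrix similar to $C$ is $\tilde DC\tilde D^{-1}=sI-\tilde DP'\tilde D^{-1}$. The expression should therefore read $\tilde D^{-1}(sI-\tilde DP'\tilde D^{-1})$, not $\tilde D^{-1}(s\tilde D-\tilde DP'\tilde D^{-1})$.
\end{enumerate}
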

\begin{proof} Let $A=(a_{ij})$ and $B^{-1}=(\beta_{ij})$. Since $A$ is an $\mathbf{H}$-matrix, there exists a positive vector $d=(d_i)$ such that \begin{equation}\label{defn h} |a_{ii}|d_i> \sum_{\substack{j=1\\j\neq i}}^n |a_{ij}|d_j, \qquad i=1,\ldots,n. \end{equation} Since $B$ is an invertible $\mathbf{H}$-matrix, Proposition~\ref{theorem:inverse_H_row_S_Diagonal_dominant} ensures that there exists a positive vector $\tilde d=(\tilde d_i)$ such that \begin{equation}\label{eq:H_AoinverseB_beta} |\beta_{ii}|\tilde d_i> |\beta_{ij}|\tilde d_j, \qquad i\neq j. \end{equation} Define $\hat d=(\hat d_i)>0$ by $\hat d_i=d_i\tilde d_i$. Then, for each $i=1,\ldots,n$, \begin{align*} |a_{ii}\beta_{ii}|\hat d_i &=|a_{ii}|d_i\,|\beta_{ii}|\tilde d_i\\ &> \left(\sum_{\substack{j=1\\j\neq i}}^n |a_{ij}|d_j\right) |\beta_{ii}|\tilde d_i \qquad\text{(by~\eqref{defn h})}\\ &> \sum_{\substack{j=1\\j\neq i}}^n |a_{ij}|d_j\,|\beta_{ij}|\tilde d_j \qquad\text{(by~\eqref{eq:H_AoinverseB_beta})}\\ &= \sum_{\substack{j=1\\j\neq i}}^n |a_{ij}\beta_{ij}|\hat d_j. \end{align*} Hence, $A\circ B^{-1}$ is generalized strictly row diagonally dominant and therefore is an $\mathbf{H}$-matrix. 
\end{proof} 

Taking $B=A$ in Theorem~\ref{a hadamard b inv is h} immediately yields the following consequence.
\begin{cor}\label{a circ ainv is h} {\rm Let $A$ be an invertible $\mathbf{H}$-matrix. Then $ A\circ A^{-1}$ is an $\mathbf{H}$-matrix. }
\end{cor}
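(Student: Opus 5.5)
This follows by specializing Theorem~\ref{a hadamard b inv is h} to $B=A$. First we check the hypotheses of that theorem. Suppose $A$ is an invertible $\mathbf{H}$-matrix in the sense used in this paper, namely $\mathcal{M}(A)$ is an invertible $\mathbf{M}$-matrix. Then $A$ is an $\mathbf{H}$-matrix, which is the hypothesis on the first factor. The same $A$ is also an invertible $\mathbf{H}$-matrix, which is the hypothesis on the second factor. Theorem~\ref{a hadamard b inv is h} then gives directly that $A\circ A^{-1}$ is an $\mathbf{H}$-matrix.

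For completeness, we trace the mechanism to see that nothing degenerates when the two factors coincide. By the characterization of invertible $\mathbf{H}$-matrices, there is a positive vector $d$ with $|a_{ii}|d_i>\sum_{j\neq i}|a_{ij}|d_j$ for every $i$. Proposition~\ref{theorem:inverse_H_row_S_Diagonal_dominant}, applied to $A$, gives a positive vector $\tilde d$ with $|\alpha_{ii}|\tilde d_i>|\alpha_{ij}|\tilde d_j$ for $i\neq j$, where $A^{-1}=(\alpha_{ij})$.

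Strict diagonal dominance of $AD$ forces $a_{ii}\neq 0$. The strict inequality in Proposition~\ref{theorem:inverse_H_row_S_Diagonal_dominant} forces $\alpha_{ii}\neq0$ whenever $n\ge2$; for $n=1$ the claim is trivial. Hence the diagonal entries $a_{ii}\alpha_{ii}$ of $A\circ A^{-1}$ are nonzero.

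Multiplying the two inequalities termwise, with weights $\hat d_i=d_i\tilde d_i$, shows that $A\circ A^{-1}$ is generalized strictly row diagonally dominant. A row with all off-diagonal entries zero is no problem, since its diagonal entry is still positive in modulus. Consequently $\mathcal{M}(A\circ A^{-1})$ is an invertible $\mathbf{M}$-matrix.

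The only step requiring care is that Proposition~\ref{theorem:inverse_H_row_S_Diagonal_dominant} applies to $A$ itself. This holds because $A^T$ is again an invertible $\mathbf{H}$-matrix, as $\mathcal{M}(A^T)=\mathcal{M}(A)^T$. Beyond this there is no real obstacle. In fact the argument yields the stronger conclusion that $A\circ A^{-1}\in\mathcal{H}_I$.
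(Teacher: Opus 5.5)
Your proposal is correct and matches the paper, which obtains the corollary exactly by taking $B=A$ in Theorem~\ref{a hadamard b inv is h}. Your supplementary trace is also sound. It rightly puts the strictness of the weighted dominance on $|a_{ii}|d_i>\sum_{j\neq i}|a_{ij}|d_j$ together with $|\alpha_{ii}|\tilde d_i>0$, since the second comparison step in the theorem's chain is only $\geq$ when row $i$ of $A$ has zero off-diagonal part, and it correctly yields the stronger conclusion $A\circ A^{-1}\in\mathcal{H}_I$.
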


\subsubsection{Lower spectral bounds}
\label{subsubsec:h-lower-bounds}
Having shown that $A\circ A^{-1}$ is an $\mathbf{H}$-matrix whenever $A$ is an invertible $\mathbf{H}$-matrix, it is natural to ask whether a positive lower bound depending only on the order of the matrix remains valid when the class of invertible $\mathbf{M}$-matrices is enlarged to invertible $\mathbf{H}$-matrices. More precisely, one may ask whether there exists a constant $c_n>0$, depending only on $n$, such that
\[
q(A\circ A^{-1})\geq c_n
\]
for every invertible $\mathbf{H}$-matrix $A$ of order $n$. The following example shows that the answer is negative: even in dimension two, $q(A\circ A^{-1})$ can be made arbitrarily small.

\begin{ex}\label{ex:Hmatrix_counterexample}
{\rm Consider $A=\begin{pmatrix}1&-\alpha\\ \beta&1\end{pmatrix}$, where $\alpha,\beta>0$ and $\alpha\beta<1$. Its comparison matrix is $\mathcal{M}(A)=\begin{pmatrix}1&-\alpha\\ -\beta&1\end{pmatrix}$, with $\det\mathcal{M}(A)=1-\alpha\beta>0$. Thus, $\mathcal{M}(A)$ is an invertible $\mathbf{M}$-matrix, and hence $A$ is an invertible $\mathbf{H}$-matrix.

A direct computation gives $A^{-1}=\frac{1}{1+\alpha\beta}
\begin{pmatrix}1&\alpha\\ -\beta&1
\end{pmatrix}$ and
$A\circ A^{-1}=\frac{1}{1+\alpha\beta}\begin{pmatrix}1&-\alpha^2\\ -\beta^2&1\end{pmatrix}$.
The eigenvalues of $A\circ A^{-1}$ are $1$ and $\frac{1-\alpha\beta}{1+\alpha\beta}$. Therefore,
$q(A\circ A^{-1})=\frac{1-\alpha\beta}{1+\alpha\beta}\to0$ as $\alpha\beta\to1^-$. Hence, $q(A\circ A^{-1})$ has no positive lower bound over the class of invertible $\mathbf{H}$-matrices of order $2$.

The same conclusion holds for every $n\ge2$. Indeed, for $n>2$, taking $B=\operatorname{diag}(A,I_{n-2})\in\mathbb{R}^{n\times n}$ gives an invertible $\mathbf{H}$-matrix with $B\circ B^{-1}=\operatorname{diag}(A\circ A^{-1},I_{n-2})$. Thus,
$q(B\circ B^{-1})=q(A\circ A^{-1})\to0$ as $\alpha\beta\to1^-$. Hence, for every $n\ge2$, $q(A\circ A^{-1})$ can be made arbitrarily close to zero within the class of invertible $\mathbf{H}$-matrices.}
\end{ex}

The preceding example illustrates a key difference from the $\mathbf{M}$-matrix setting: an invertible $\mathbf{H}$-matrix need not be inverse-positive, so $A^{-1}$ may have mixed signs. This suggests replacing $A^{-1}$ by an entrywise nonnegative matrix that controls $|A^{-1}|$. By the classical result of Ostrowski \cite{ostro}, we have
$
|A^{-1}|\leq\mathcal{M}(A)^{-1}.
$
Moreover, since $\mathcal{M}(A)$ is an invertible $\mathbf{M}$-matrix, we have $\mathcal{M}(A)^{-1}\geq0$. 
Thus, $\mathcal{M}(A)^{-1}$ is a natural choice, leading to the following dimension-dependent lower bound for $q(A\circ\mathcal{M}(A)^{-1})$.

\begin{thm}\label{thm:Hadamard_comparison_inverse}
{\rm Let $A\in\mathbb{R}^{n\times n}$ be an invertible $\mathbf{H}$-matrix of order $n\geq2$. Then
\[
q\bigl(A\circ\mathcal{M}(A)^{-1}\bigr)\geq\frac{2}{n}.
\]}
\end{thm}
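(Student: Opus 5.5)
Set $C=\mathcal{M}(A)$, an invertible $\mathbf{M}$-matrix, and $G=A\circ C^{-1}$. The plan is to compare $G$ with $C\circ C^{-1}$, to which the known bound $q(C\circ C^{-1})\geq\frac{2}{n}$ applies. Write $C^{-1}=(\gamma_{ij})\geq0$. The diagonal entries of $C^{-1}$ are positive, and $\gamma_{ii}c_{ii}\geq 1$, so $|a_{ii}|\gamma_{ii}>0$. Entrywise we have $\mathcal{M}(G)=\mathcal{M}(A)\circ C^{-1}=C\circ C^{-1}$, because $|a_{ij}\gamma_{ij}|=|a_{ij}|\gamma_{ij}$. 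Hence $\mathcal{M}(G)$ is exactly $C\circ C^{-1}$, which by \cite{johnson1977ahadamard} is an $\mathbf{M}$-matrix. It is in fact invertible, since its smallest eigenvalue is at least $\frac{2}{n}>0$ by the Fiedler--Markham bound. So $G$ is an invertible $\mathbf{H}$-matrix and $q(\mathcal{M}(G))\geq\frac{2}{n}$.

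The remaining step is the standard comparison inequality: for any matrix $G$ whose comparison matrix is an invertible $\mathbf{M}$-matrix, every eigenvalue $\lambda$ of $G$ satisfies $|\lambda|\geq q(\mathcal{M}(G))$. I would prove this via Ostrowski's inequality $|G^{-1}|\leq\mathcal{M}(G)^{-1}$, cited just before the theorem. That inequality gives
\[
\rho(G^{-1})\leq\rho(|G^{-1}|)\leq\rho(\mathcal{M}(G)^{-1}).
\]
The first step uses $\rho(X)\leq\rho(|X|)$, and the second uses monotonicity of the spectral radius on nonnegative matrices. Since $\mathcal{M}(G)$ is an invertible $\mathbf{M}$-matrix, $q(\mathcal{M}(G))=1/\rho(\mathcal{M}(G)^{-1})$, and $q(G)=1/\rho(G^{-1})$ because $G$ is invertible. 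Hence
\[
q(G)\geq q(\mathcal{M}(G))=q(C\circ C^{-1})\geq\frac{2}{n}.
\]

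I do not expect a real obstacle here. The only point needing care is to confirm that $\mathcal{M}(A\circ C^{-1})=C\circ C^{-1}$ holds exactly. This requires $C^{-1}\geq 0$ and the sign pattern of $C$: the diagonal of $C$ is $|a_{ii}|$ and its off-diagonal entries are $-|a_{ij}|$. One must also check that the cited lower bound $\frac{2}{n}$ applies to $C\circ C^{-1}$ for every invertible $\mathbf{M}$-matrix $C$, reducible ones included. This holds as stated in the literature, since $q$ of a reducible case decomposes blockwise over the diagonal blocks and $\frac{2}{k}\geq\frac{2}{n}$ for $k\leq n$.
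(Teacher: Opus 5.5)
Your proposal is correct and follows essentially the same route as the paper. Both arguments use $\mathcal{M}(A)^{-1}\geq 0$ to identify $\mathcal{M}\bigl(A\circ\mathcal{M}(A)^{-1}\bigr)=\mathcal{M}(A)\circ\mathcal{M}(A)^{-1}$, and then combine Ostrowski's inequality with monotonicity of the spectral radius to obtain $q\bigl(A\circ\mathcal{M}(A)^{-1}\bigr)\geq q\bigl(\mathcal{M}(A)\circ\mathcal{M}(A)^{-1}\bigr)\geq\frac{2}{n}$. Your extra checks, namely invertibility of $C\circ C^{-1}$ and the reducible case, are details the paper takes for granted.
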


\begin{proof}
Let $M=\mathcal{M}(A)$ and $C=A\circ M^{-1}$. Since $M$ is an invertible $\mathbf{M}$-matrix, $M^{-1}\geq0$, and hence
\[
\mathcal{M}(C)
=\mathcal{M}(A\circ M^{-1})
=\mathcal{M}(A)\circ M^{-1}
=M\circ M^{-1}.
\]
As $M\circ M^{-1}$ is an invertible $\mathbf{M}$-matrix, $C$ is an invertible $\mathbf{H}$-matrix. Thus, Ostrowski's inequality gives
$
|C^{-1}|\leq\mathcal{M}(C)^{-1}.
$
Moreover, $\rho(X)\leq\rho(|X|)$ for any square matrix $X$, and the spectral radius is monotone for nonnegative matrices. Therefore,
\[
\rho(C^{-1})
\leq\rho\bigl(|C^{-1}|\bigr)
\leq\rho\bigl(\mathcal{M}(C)^{-1}\bigr),
\]
and hence
\[
q(C)
=\frac{1}{\rho(C^{-1})}
\geq\frac{1}{\rho\bigl(\mathcal{M}(C)^{-1}\bigr)}
=q\bigl(\mathcal{M}(C)\bigr).
\]
Using $\mathcal{M}(C)=M\circ M^{-1}$, we obtain
\begin{equation}\label{compareq*andq}
q\bigl(A\circ\mathcal{M}(A)^{-1}\bigr)
=q(C)
\geq q\bigl(M\circ M^{-1}\bigr).
\end{equation}
Finally, the known $\mathbf{M}$-matrix bound, together with \eqref{compareq*andq}, yields
$
q\bigl(A\circ\mathcal{M}(A)^{-1}\bigr)\geq q(M\circ M^{-1})\geq\frac{2}{n},
$ as required.
\end{proof}

\begin{rem}\label{rem:counterexample_resolved}
{\rm The behavior in Example~\ref{ex:Hmatrix_counterexample} is closely related to the fact that an invertible $\mathbf{H}$-matrix need not be inverse-positive. Replacing $A^{-1}$ by $\mathcal{M}(A)^{-1}$ gives a clearly different behavior. Indeed, for the same matrix
$
A=\begin{pmatrix}1&-\alpha\\ \beta&1\end{pmatrix},
$
where $\alpha,\beta>0$ and $\alpha\beta<1$, we have
$
\mathcal{M}(A)^{-1}
=\frac{1}{1-\alpha\beta}
\begin{pmatrix}1&\alpha\\ \beta&1\end{pmatrix}.
$
Hence, setting $C=A\circ\mathcal{M}(A)^{-1}$,
$
C=\frac{1}{1-\alpha\beta}
\begin{pmatrix}1&-\alpha^2\\ \beta^2&1\end{pmatrix}
$
and
$
C^{-1}
=\frac{1-\alpha\beta}{1+\alpha^2\beta^2}
\begin{pmatrix}1&\alpha^2\\ -\beta^2&1\end{pmatrix}.
$
Since the eigenvalues of the last matrix are $1\pm i\alpha\beta$, we obtain
\[
\rho(C^{-1})
=\frac{1-\alpha\beta}{\sqrt{1+\alpha^2\beta^2}},
\qquad
q(C)
=\frac{\sqrt{1+\alpha^2\beta^2}}{1-\alpha\beta}>1=\frac{2}{n}.
\]
Thus, in contrast to $q(A\circ A^{-1})$, which tends to zero as $\alpha\beta\to1^-$, we have
$
q(A\circ\mathcal{M}(A)^{-1})\to\infty.
$}
\end{rem}

We conclude this discussion with bounds involving the Jacobi iteration matrix. Xiang \cite[Theorem~3.1]{xiang2003hadamard} proved that, for an invertible $\mathbf{H}$-matrix $A$,
\begin{equation}\label{xiang bound}
q(A\circ A^{-1})
\geq
\frac{1-\rho(J_{\mathcal{M}(A)})^2}
     {1+\rho(J_{\mathcal{M}(A)})^2}.
\end{equation}
Although this gives a positive lower bound for each invertible $\mathbf{H}$-matrix, it is not bounded away from zero over the entire class, as the following example shows.

\begin{ex}
{\rm Consider the matrix $A$ from Example~\ref{ex:Hmatrix_counterexample}. In this case,
$\rho(J_{\mathcal{M}(A)})=\sqrt{\alpha\beta}$, and hence
\[
\frac{1-\rho(J_{\mathcal{M}(A)})^2}
     {1+\rho(J_{\mathcal{M}(A)})^2}
=
\frac{1-\alpha\beta}{1+\alpha\beta}
\to0
\qquad\text{as }\ \alpha\beta\to1^-.
\]
Thus, the lower bound (\ref{xiang bound}) can be made arbitrarily close to zero within the class of invertible $\mathbf{H}$-matrices.}
\end{ex}

For $A\circ\mathcal{M}(A)^{-1}$, however, the corresponding $\mathbf{M}$-matrix estimate yields a dimension-dependent positive lower bound. More precisely, Xiang \cite{xiang2003hadamard} established for an invertible $\mathbf{M}$-matrix $M$ that
\[
q(M\circ M^{-1})
\geq
\max\left\{
1-\rho(J_M)^2,\,
\frac{1+\rho(J_M)^{1/(n+2)}}
     {1+(n-1)\rho(J_M)^{1/(n+2)}}
\right\}.
\]
Combining this with \eqref{compareq*andq} gives the following result.

\begin{thm}\label{h matrix jacobi bound}
{\rm Let $A$ be an invertible $\mathbf{H}$-matrix of order $n$. Then
\[
q\bigl(A\circ\mathcal{M}(A)^{-1}\bigr)
\geq
\max\left\{
1-\rho(J_{\mathcal{M}(A)})^2,\,
\frac{1+\rho(J_{\mathcal{M}(A)})^{1/(n+2)}}
     {1+(n-1)\rho(J_{\mathcal{M}(A)})^{1/(n+2)}}
\right\}.
\]}
\end{thm}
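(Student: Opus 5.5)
The plan is to reduce the statement to the $\mathbf{M}$-matrix case through the comparison inequality \eqref{compareq*andq}, and then apply Xiang's bound to $M=\mathcal{M}(A)$.

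First, set $M=\mathcal{M}(A)$ and $C=A\circ M^{-1}$. Since $A$ is an invertible $\mathbf{H}$-matrix, $M$ is an invertible $\mathbf{M}$-matrix of order $n$. In the proof of Theorem~\ref{thm:Hadamard_comparison_inverse} we showed that $M^{-1}\geq0$ implies $\mathcal{M}(C)=M\circ M^{-1}$. Combining this with Ostrowski's inequality $|C^{-1}|\leq\mathcal{M}(C)^{-1}$ and the monotonicity of the spectral radius on nonnegative matrices gave
\[
q(C)\geq q(M\circ M^{-1}).
\]
Here I would only quote \eqref{compareq*andq} and not redo the argument.

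Second, apply Xiang's estimate for invertible $\mathbf{M}$-matrices, quoted just before the statement, to $M$:
\[
q(M\circ M^{-1})\geq\max\left\{1-\rho(J_M)^2,\ \frac{1+\rho(J_M)^{1/(n+2)}}{1+(n-1)\rho(J_M)^{1/(n+2)}}\right\}.
\]
Since $J_M=J_{\mathcal{M}(A)}$ by definition, chaining the two inequalities gives the claimed bound.

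The only point needing care is that Xiang's hypotheses hold for $M$. It must be an invertible $\mathbf{M}$-matrix of the same order $n$, and its Jacobi matrix must be well defined. The diagonal of $M$ is $|a_{ii}|$, and these entries are positive because $M$ is an invertible $\mathbf{M}$-matrix; an invertible $\mathbf{M}$-matrix always has positive diagonal entries. Also $\rho(J_M)<1$, so both terms in the maximum are meaningful. With these checks in place, no step is a genuine obstacle: the theorem is a direct corollary.
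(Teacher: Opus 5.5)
Your proposal is correct and follows the paper's proof exactly: the paper also chains the comparison inequality $q(A\circ\mathcal{M}(A)^{-1})\geq q(M\circ M^{-1})$ from \eqref{compareq*andq} with Xiang's $\mathbf{M}$-matrix bound applied to $M=\mathcal{M}(A)$. Your extra checks — that $M$ has positive diagonal so $J_M$ is defined, and that $\rho(J_M)<1$ — are correct and make explicit what the paper leaves implicit.
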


We now give a necessary condition for the bound (\ref{xiang bound}) for $q(A\circ A^{-1})$ to approach zero. This result parallels the limiting behavior considered earlier for invertible $\mathbf{M}$-matrices (Section \ref{jacobi m}). There, approaching the sharp lower bound $\frac{2}{n}$ forces the spectral radius of the Jacobi iteration matrix to approach $1$. Here, for invertible $\mathbf{H}$-matrices, the limiting value is zero: if
$q(A_k\circ A_k^{-1})\to0$, then
$\rho(J_{\mathcal{M}(A_k)})\to1$.

\begin{pro}\label{prop:H-q-zero-jacobi}
{\rm Let $\{A_k\}$ be a sequence of invertible $\mathbf{H}$-matrices of order $n$. If 
$
q(A_k\circ A_k^{-1})\to0,
$
then
$
\rho(J_{\mathcal{M}(A_k)})\to1.
$}
\end{pro}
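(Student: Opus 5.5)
The plan is to use Xiang's bound \eqref{xiang bound}, valid for every invertible $\mathbf{H}$-matrix. Write $\rho_k=\rho(J_{\mathcal{M}(A_k)})$. First, since $\mathcal{M}(A_k)$ is an invertible $\mathbf{M}$-matrix with positive diagonal part $D_k$, the matrix $D_k^{-1}\mathcal{M}(A_k)=I-J_{\mathcal{M}(A_k)}$ is again an invertible $\mathbf{M}$-matrix. Its off-diagonal part $J_{\mathcal{M}(A_k)}$ is nonnegative, so the standard characterization gives $0\le\rho_k<1$. Hence the function
\[
f(x)=\frac{1-x^2}{1+x^2}
\]
is well defined on $[0,1)$, continuous, strictly decreasing, positive there, and vanishes only at $x=1$.

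Next, \eqref{xiang bound} gives $0<f(\rho_k)\le q(A_k\circ A_k^{-1})$ for each $k$. Since $q(A_k\circ A_k^{-1})\to0$, squeezing yields $f(\rho_k)\to0$.

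To conclude, I invert $f$. Solving $y=f(x)$ gives $x^2=\frac{1-y}{1+y}$, so
\[
\rho_k=\sqrt{\frac{1-f(\rho_k)}{1+f(\rho_k)}}.
\]
The right-hand side is continuous in $f(\rho_k)$ near $0$, so $\rho_k\to1$. Alternatively, I can argue by contradiction. If $\rho_{k_j}\le 1-\delta$ along a subsequence, then monotonicity gives $f(\rho_{k_j})\ge f(1-\delta)>0$, which contradicts $f(\rho_k)\to0$.

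There is no real obstacle here. The only point needing care is confirming that $\rho_k<1$, so that the bound is meaningful, and that $f$ is decreasing, so that small values of $f(\rho_k)$ force $\rho_k$ close to $1$. Both follow from the facts above.
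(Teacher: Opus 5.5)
Your proposal is correct and follows essentially the same route as the paper. Both arguments squeeze $\frac{1-\rho_k^2}{1+\rho_k^2}$ between $0$ and $q(A_k\circ A_k^{-1})$ using Xiang's bound \eqref{xiang bound}, then use $0\le\rho_k<1$ to conclude $\rho_k\to1$; your justification of $\rho_k<1$ and your explicit inversion of $f$ simply spell out steps the paper states briefly.
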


\begin{proof}
From (\ref{xiang bound}), for each $k$, we have
\[
0\leq
\frac{1-\rho(J_{\mathcal{M}(A_k)})^2}
     {1+\rho(J_{\mathcal{M}(A_k)})^2}
\leq
q(A_k\circ A_k^{-1}).
\]
Therefore, $q(A_k\circ A_k^{-1})\to0$ implies $ 
\dfrac{1-\rho(J_{\mathcal{M}(A_k)})^2}
     {1+\rho(J_{\mathcal{M}(A_k)})^2}
\to0.$
Since $\mathcal{M}(A_k)$ is an invertible $\mathbf{M}$-matrix,
$0\leq\rho(J_{\mathcal{M}(A_k)})<1$ for every $k$. Thus
$\rho(J_{\mathcal{M}(A_k)})\to1$.
\end{proof}

It is now natural to ask whether the converse implication holds, that is does 
$\rho(J_{\mathcal{M}(A_k)})\to1
$ imply $q(A_k\circ A_k^{-1})\to0?
 $
 However, for the simple sequence of matrices $A_k=\begin{pmatrix}
     1 & 0 \\ 0 & \frac{1}{k}
\end{pmatrix}$, we have $\rho(J_{\mathcal{M}(A_k)})=1$ but $q(A_k\circ A_k^{-1})=1\nrightarrow 0$ as $k \to \infty$.

\subsubsection{Upper spectral bounds and positive diagonal symmetrizability}\label{subsubsec:h-upper-bounds}
We now turn to upper bounds for $q(A\circ A^{-1})$. For invertible $\mathbf{M}$-matrices, the classical upper bound is $1$. It is therefore natural to ask whether the same bound continues to hold for invertible $\mathbf{H}$-matrices. We first show that the answer is affirmative in dimension two.

\begin{pro}\label{prop:h-2by2-upper}
{\rm Let $A\in\mathbb{R}^{2\times2}$ be an invertible $\mathbf{H}$-matrix. Then $
 0<q(A\circ A^{-1})\leq1.
$}
\end{pro}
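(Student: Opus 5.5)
Write $A=\begin{pmatrix}a&b\\ c&d\end{pmatrix}$ with $\Delta=ad-bc\neq0$. The plan is to compute $A\circ A^{-1}$ explicitly and read off its eigenvalues. Since
\[
A^{-1}=\frac{1}{\Delta}\begin{pmatrix}d&-b\\ -c&a\end{pmatrix},
\]
we get
\[
A\circ A^{-1}=\frac{1}{\Delta}\begin{pmatrix}ad&-b^2\\ -c^2&ad\end{pmatrix}.
\]
This matrix has equal diagonal entries, so its eigenvalues are
\[
\frac{ad\pm\sqrt{b^2c^2}}{\Delta}=\frac{ad\pm|bc|}{\Delta}.
\]
Using $\Delta=ad-bc$, one of the two eigenvalues is always $1$:
\begin{itemize}
\item if $bc\geq 0$, the choice $ad-|bc|$ gives $\Delta/\Delta=1$, and the other eigenvalue is $(ad+bc)/(ad-bc)$;
\item if $bc<0$, the choice $ad+|bc|=ad-bc$ gives $1$, and the other eigenvalue is again $(ad+bc)/(ad-bc)$.
\end{itemize}
So in all cases
\[
\sigma(A\circ A^{-1})=\left\{1,\ \frac{ad+bc}{ad-bc}\right\}.
\]
The upper bound $q(A\circ A^{-1})\leq 1$ follows at once, since $1$ is an eigenvalue.

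For positivity we need $ad+bc\neq 0$ (the denominator $\Delta$ is nonzero by invertibility). Here the $\mathbf{H}$-matrix hypothesis enters. $\mathcal{M}(A)$ is an invertible $\mathbf{M}$-matrix, so $\det\mathcal{M}(A)=|a||d|-|b||c|>0$, that is, $|ad|>|bc|$. By the reverse triangle inequality,
\[
|ad+bc|\geq |ad|-|bc|>0 .
\]
Hence both eigenvalues are nonzero and $q(A\circ A^{-1})>0$. This is consistent with Example~\ref{ex:Hmatrix_counterexample}, where the second eigenvalue $(1-\alpha\beta)/(1+\alpha\beta)$ is positive but can approach zero.

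The argument is elementary. The only point requiring care is the sign bookkeeping for $bc$ when identifying the eigenvalue $1$. Alternatively, one can bypass that case split: $\det(A\circ A^{-1})=(a^2d^2-b^2c^2)/\Delta^2$ and $\operatorname{tr}(A\circ A^{-1})=2ad/\Delta$, so the characteristic polynomial factors as $(\lambda-1)\bigl(\lambda-(ad+bc)/\Delta\bigr)$, giving the same spectrum directly.
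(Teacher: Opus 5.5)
Your proposal is correct and follows the paper's proof: the same explicit computation of $A\circ A^{-1}$ and the same spectrum $\{1,\,(ad+bc)/(ad-bc)\}$, from which $q\leq 1$ follows. For $q>0$ the paper just appeals to invertibility of $A\circ A^{-1}$, whereas you make the step self-contained via $|ad+bc|\geq|ad|-|bc|>0$, which uses the H-matrix hypothesis directly.
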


\begin{proof}
Let
$
A=\begin{pmatrix}a&b\\ c&d\end{pmatrix}.
$
Since $A$ is an invertible $\mathbf{H}$-matrix, its comparison matrix is an invertible $\mathbf{M}$-matrix, and hence $|ad|>|bc|$. Moreover,
$
A^{-1}
=\frac{1}{ad-bc}
\begin{pmatrix}
\ \ d&-b\\
-c&\ \ a
\end{pmatrix},
$
so $A\circ A^{-1}
=
\frac{1}{ad-bc}
\begin{pmatrix}
ad&-b^2\\
-c^2&ad
\end{pmatrix}.
$
The eigenvalues of $A\circ A^{-1}$ are
$
1
$
and
$
\frac{ad+bc}{ad-bc}.
$
Thus, $1\in\sigma(A\circ A^{-1})$, and therefore
$q(A\circ A^{-1})\leq1$. Since $A\circ A^{-1}$ is invertible, we also have
$q(A\circ A^{-1})>0$.
\end{proof}

For matrices of arbitrary order, we obtain the same upper bound under positive diagonal symmetrizability. This assumption is motivated by the connection, discussed in the Introduction, between positive diagonal symmetrizability and the equality case in the upper bound $q(A\circ A^{-1})\leq1$ for invertible $\mathbf{M}$-matrices.

Recall that a real matrix $A$ is said to be {\it diagonally symmetrizable} if there exists an invertible diagonal matrix $D$ such that $AD$ is symmetric. If $D$ can be chosen to have positive diagonal entries, then $A$ is called {\it positive diagonally symmetrizable} (PDS). Equivalently, if $D=\operatorname{diag}(d_1,\ldots,d_n)$ with $d_i>0$, then $A$ is PDS if
$
a_{ij}d_j=a_{ji}d_i,$ $1\leq i,j\leq n.
$

In particular, every real symmetric matrix is PDS by taking $D=I$. We now show that the upper bound $1$ holds for PDS invertible $\mathbf{H}$-matrices of arbitrary order.

\begin{thm}\label{thm:PDS-H-upper}
{\rm Let $A$ be a positive diagonally symmetrizable invertible $\mathbf{H}$-matrix. Then
$
1\in\sigma(A\circ A^{-1}),
$ and consequently
$
0<q(A\circ A^{-1})\leq1.
$}

\end{thm}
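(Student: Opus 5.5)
We exhibit an explicit eigenvector of $A\circ A^{-1}$ for the eigenvalue $1$, built from the symmetrizing diagonal matrix.

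\textbf{Setup.} Let $D=\operatorname{diag}(d_1,\ldots,d_n)$ with $d_i>0$ and $S=AD$ symmetric. Then $A=SD^{-1}$ and $A^{-1}=DS^{-1}$, where $S^{-1}$ is also symmetric. Entrywise,
\[
a_{ij}=s_{ij}/d_j,\qquad (A^{-1})_{ij}=d_i\,(S^{-1})_{ij},
\]
so
\[
(A\circ A^{-1})_{ij}=\frac{d_i}{d_j}\,s_{ij}(S^{-1})_{ij}.
\]
Hence $A\circ A^{-1}=D\,(S\circ S^{-1})\,D^{-1}$ is diagonally similar to $S\circ S^{-1}$, and it suffices to show $1\in\sigma(S\circ S^{-1})$.

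\textbf{Key step.} For any invertible matrix $X$ we have $(X\circ X^{-T})\mathbf{1}=\mathbf{1}$, since the $i$th row sum is $\sum_j x_{ij}(X^{-1})_{ji}=(XX^{-1})_{ii}=1$. Because $S^{-1}$ is symmetric, $S^{-T}=S^{-1}$, so $(S\circ S^{-1})\mathbf{1}=\mathbf{1}$. Transferring back, $(A\circ A^{-1})(D\mathbf{1})=D\mathbf{1}$, that is, $d=(d_i)$ is an eigenvector of $A\circ A^{-1}$ with eigenvalue $1$. (One can also check this directly: $\sum_j a_{ij}(A^{-1})_{ij}d_j=\sum_j a_{ij}(A^{-1})_{ji}d_i$, using the symmetry relation for $A^{-1}$ inherited from $S^{-1}$.) This gives $q(A\circ A^{-1})\le 1$.

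\textbf{Positivity.} We need $A\circ A^{-1}$ to be invertible. By Corollary~\ref{a circ ainv is h} it is an $\mathbf{H}$-matrix, but invertibility is not automatic from that. Better: the proof of Theorem~\ref{a hadamard b inv is h} shows \emph{strict} generalized row diagonal dominance of $A\circ A^{-1}$ via the vector $\hat d$, and such matrices are nonsingular by the stated characterization of invertible $\mathbf{H}$-matrices. Hence $0\notin\sigma(A\circ A^{-1})$ and $q>0$.

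The only delicate point is this last invertibility remark; the eigenvector identity itself is routine.
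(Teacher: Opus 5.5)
Your proof is correct and is essentially the paper's argument: the paper also shows that the vector $d$ of the symmetrizer satisfies $(A\circ A^{-1})d=d$, using $\alpha_{ij}d_j=\alpha_{ji}d_i$, and gets $q>0$ because $A\circ A^{-1}$ is an invertible $\mathbf{H}$-matrix. Your detour through the similarity $A\circ A^{-1}=D(S\circ S^{-1})D^{-1}$, and your remark that invertibility comes from the strict generalized diagonal dominance established in the proof of Theorem~\ref{a hadamard b inv is h}, are slightly more explicit versions of the same two steps.
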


\begin{proof}
Since $A$ is PDS, there exists a positive diagonal matrix $D=\operatorname{diag}(d_1,\ldots,d_n)$ such that $AD$ is symmetric. Writing $A^{-1}=(\alpha_{ij})$, the symmetry of $(AD)^{-1}=D^{-1}A^{-1}$ gives
$
\frac{\alpha_{ij}}{d_i}=\frac{\alpha_{ji}}{d_j},
$
and hence
$
\alpha_{ij}d_j=\alpha_{ji}d_i.
$
Let $C=A\circ A^{-1}$ and $d=(d_1,\ldots,d_n)^T$. Then

$$
(Cd)_i=\sum_{j=1}^n a_{ij}\alpha_{ij}d_j
=d_i\sum_{j=1}^n a_{ij}\alpha_{ji}
=d_i(AA^{-1})_{ii}
=d_i.
$$
Thus $Cd=d$, so $1\in\sigma(C)$ and therefore $q(C)\leq1$. Since $C=A\circ A^{-1}$ is an invertible $\mathbf{H}$-matrix, $q(C)>0$. Consequently,
$
0<q(A\circ A^{-1})\leq1.
$
\end{proof}

We now examine when equality occurs in the upper bound $q(A\circ A^{-1})\leq 1$ for PDS invertible $\mathbf{H}$-matrices, leading to a complete characterization in the irreducible case.

\begin{lem}\label{lem:symmetric-H-eigenvalue}
{\rm Let $B=(b_{ij})$ be a real symmetric invertible $\mathbf H$-matrix with $b_{ii}>0$ for all $i$. Then $
\lambda_{\min}(B)\geq\lambda_{\min}(\mathcal M(B))>0.
$ In particular, $B$ is positive definite.}
\end{lem}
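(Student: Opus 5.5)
The plan is to compare quadratic forms. Since $B$ is real symmetric, so is $\mathcal M(B)$, and both have real spectra. By the Rayleigh quotient characterization,
\[
\lambda_{\min}(B)=\min_{\|x\|_2=1}x^TBx,\qquad \lambda_{\min}(\mathcal M(B))=\min_{\|y\|_2=1}y^T\mathcal M(B)y .
\]
Fix a unit vector $x$ and set $y=|x|$, which is again a unit vector. Using $b_{ii}>0$, so that $b_{ii}=|b_{ii}|$, we estimate
\[
x^TBx=\sum_i b_{ii}x_i^2+\sum_{i\neq j}b_{ij}x_ix_j\;\geq\;\sum_i |b_{ii}|\,|x_i|^2-\sum_{i\neq j}|b_{ij}|\,|x_i|\,|x_j|=y^T\mathcal M(B)y .
\]
The right-hand side is at least $\lambda_{\min}(\mathcal M(B))$. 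Minimizing over $x$ then gives $\lambda_{\min}(B)\geq\lambda_{\min}(\mathcal M(B))$.

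It remains to show $\lambda_{\min}(\mathcal M(B))>0$. Since $B$ is an invertible $\mathbf H$-matrix, $\mathcal M(B)$ is an invertible $\mathbf M$-matrix. It is also symmetric, so its eigenvalues are real. By the Perron--Frobenius remark in the Introduction, $q(\mathcal M(B))$ is a positive eigenvalue of $\mathcal M(B)$.

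To see that every eigenvalue is positive, write $\mathcal M(B)=sI-N$ with $N\geq0$ and $s>\rho(N)$. Each eigenvalue has the form $s-\mu$ with $\mu\in\sigma(N)$ real and $|\mu|\leq\rho(N)<s$, hence $s-\mu>0$. Therefore $\lambda_{\min}(\mathcal M(B))>0$, so $\mathcal M(B)$ is positive definite. By the inequality above, $B$ is positive definite as well.

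The only real obstacle is keeping track of where the hypothesis $b_{ii}>0$ is used. It is what lets the diagonal terms of $x^TBx$ match those of $\mathcal M(B)$ exactly, rather than acquiring a sign. The off-diagonal terms are handled by the elementary bound $b_{ij}x_ix_j\geq-|b_{ij}||x_i||x_j|$. Everything else follows from standard facts about symmetric matrices and $\mathbf M$-matrices.
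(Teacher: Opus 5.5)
Your proposal is correct and follows essentially the same route as the paper: the comparison $x^TBx\geq |x|^T\mathcal M(B)|x|\geq\lambda_{\min}(\mathcal M(B))\|x\|_2^2$, followed by positivity of $\lambda_{\min}(\mathcal M(B))$. The only addition is that you justify positive definiteness of the symmetric nonsingular $\mathbf M$-matrix $\mathcal M(B)$ via the splitting $\mathcal M(B)=sI-N$ with $s>\rho(N)$, a fact the paper simply asserts.
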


\begin{proof}
Since $B$ is symmetric, $\mathcal M(B)$ is symmetric. Moreover, since $B$ is an invertible $\mathbf H$-matrix, $\mathcal M(B)$ is a nonsingular $\mathbf M$-matrix. Hence $\mathcal M(B)$ is positive definite, so $\lambda_{\min}(\mathcal M(B))>0$. For any $x\in\mathbb R^n$, since $b_{ii}>0$,
$$
\begin{aligned}
x^TBx
&=\sum_i b_{ii}x_i^2+2\sum_{i<j}b_{ij}x_ix_j
\geq\sum_i |b_{ii}|x_i^2-2\sum_{i<j}|b_{ij}|\,|x_i|\,|x_j|\\
&=|x|^T\mathcal M(B)|x|
\geq\lambda_{\min}(\mathcal M(B))\|x\|_2^2.
\end{aligned}
$$
Taking the minimum over $|x|_2=1$ gives
$
\lambda_{\min}(B)\geq\lambda_{\min}(\mathcal M(B))>0.
$
\end{proof}

\begin{pro}\label{prop:PDS-H-equality}
{\rm Let $A=(a_{ij})$ be a positive diagonally symmetrizable invertible
$\mathbf{H}$-matrix. If the diagonal entries of $A$ have the same sign,
then
$
q(A\circ A^{-1})=1.
$}
\end{pro}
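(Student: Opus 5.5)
Since $q(A\circ A^{-1})\le 1$ by Theorem~\ref{thm:PDS-H-upper}, the plan is to prove the reverse inequality $q(A\circ A^{-1})\ge 1$.

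\textbf{Step 1: normalizations.} We may assume all diagonal entries of $A$ are positive. Indeed, if they are all negative, replace $A$ by $-A$; then $(-A)\circ(-A)^{-1}=A\circ A^{-1}$, and $-A$ is still a PDS invertible $\mathbf H$-matrix. Next, let $D=\operatorname{diag}(d_1,\dots,d_n)>0$ be such that $AD$ is symmetric, and set
\[
B=D^{-1/2}AD^{1/2}.
\]
Then $B$ is symmetric, because $B=D^{-1/2}(AD)D^{-1/2}$. It has the same diagonal as $A$, so $b_{ii}>0$. Also $\mathcal M(B)=D^{-1/2}\mathcal M(A)D^{1/2}$ is an invertible $\mathbf M$-matrix, so $B$ is an invertible $\mathbf H$-matrix. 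Finally,
\[
B\circ B^{-1}=D^{-1/2}(A\circ A^{-1})D^{1/2},
\]
since diagonal similarity factors through the Hadamard product: $(EXF)\circ(F^{-1}YE^{-1})=E(X\circ Y)E^{-1}$ when $F=E^{-1}$. Hence $q(A\circ A^{-1})=q(B\circ B^{-1})$, and it suffices to treat a symmetric $B$ with positive diagonal.

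\textbf{Step 2: positive definiteness and Schur.} By Lemma~\ref{lem:symmetric-H-eigenvalue}, $B$ is positive definite, so $B^{-1}$ is positive definite as well. The matrix $B\circ B^{-1}$ is then symmetric, so its eigenvalues are real and $q$ is the minimum absolute eigenvalue. It remains to show $\lambda_{\min}(B\circ B^{-1})\ge 1$.

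For this, invoke the classical inequality (Fiedler) that $P\circ P^{-1}-I$ is positive semidefinite for any positive definite $P$. If I have to derive it, I would proceed as follows.
\begin{itemize}
\item Write $P=U\Lambda U^T$.
\item Then
\[
P\circ P^{-1}=\sum_{k,l}\frac{\lambda_k}{\lambda_l}(u_k\circ u_l)(u_k\circ u_l)^T .
\]
\item Symmetrize over the pairs $(k,l)$ and $(l,k)$, and use $\frac{\lambda_k}{\lambda_l}+\frac{\lambda_l}{\lambda_k}\ge 2$. This gives
\[
P\circ P^{-1}\succeq \sum_{k,l}(u_k\circ u_l)(u_k\circ u_l)^T=(UU^T)\circ(UU^T)=I .
\]
\end{itemize}
Consequently every eigenvalue of $B\circ B^{-1}$ is at least $1$, so $q(B\circ B^{-1})\ge 1$. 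Combined with Theorem~\ref{thm:PDS-H-upper}, this gives $q(A\circ A^{-1})=1$.

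\textbf{Main obstacle.} The only substantive point is the Fiedler inequality $P\circ P^{-1}\succeq I$; the spectral-decomposition argument above should handle it. The reductions in Step 1 are routine, but two details need checking: the diagonal-similarity identity for the Hadamard product, and the requirement that the diagonal entries share a sign, which is exactly what makes $B$ positive (or, after negation, negative) definite rather than indefinite.
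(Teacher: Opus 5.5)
Your proposal is correct and follows essentially the paper's route: you symmetrize via $D^{-1/2}AD^{1/2}$, reduce to a positive diagonal (you negate before symmetrizing, the paper after), get positive definiteness from Lemma~\ref{lem:symmetric-H-eigenvalue}, and apply the classical inequality $P\circ P^{-1}\succeq I$, which the paper cites and you derive correctly from the spectral decomposition, while taking $q\leq 1$ from Theorem~\ref{thm:PDS-H-upper} instead of re-checking $(P\circ P^{-1})\mathbf{1}=\mathbf{1}$. One harmless slip: the correct identity is $B\circ B^{-1}=D^{-1}(A\circ A^{-1})D$, since $(EXF)\circ(F^{-1}YE^{-1})=E^{2}(X\circ Y)E^{-2}$ when $F=E^{-1}$; this is still a diagonal similarity, so $q(B\circ B^{-1})=q(A\circ A^{-1})$ and your conclusion stands.
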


\begin{proof}
Since $A$ is positive diagonally symmetrizable, there exists a positive
diagonal matrix $D=\operatorname{diag}(d_1,\ldots,d_n)$ such that $AD$
is symmetric. Set$\colon$
$
S=D^{-1/2}AD^{1/2}.
$
Since $AD=DA^T$, we have
\[
S^T
=D^{1/2}A^TD^{-1/2}
=D^{-1/2}AD^{1/2}
=S.
\]
Thus $S$ is symmetric. Moreover, $S$ is diagonally similar to $A$, so
$S$ is an invertible $\mathbf{H}$-matrix, and $
s_{ii}=a_{ii}$, $i=1,\ldots,n.
$

Suppose first that $a_{ii}>0$ for all $i$. Then $S$ is a symmetric invertible $\mathbf{H}$-matrix with positive diagonal entries.  
Therefore, by Lemma \ref{lem:symmetric-H-eigenvalue}, $S$ is positive definite.
If $a_{ii}<0$ for all $i$, then $-S$ is a symmetric invertible
$\mathbf{H}$-matrix with positive diagonal entries, and the same
argument shows that $-S$ is positive definite. Thus, in either case,
there exists a real symmetric positive definite matrix $P$, namely
$P=S$ or $P=-S$, such that $S\circ S^{-1}=P\circ P^{-1}.$

For a real symmetric positive definite matrix $P$, the classical Hadamard-product inequality gives $P\circ P^{-1}\succeq I.$ Hence every eigenvalue of $P\circ P^{-1}$ is at least $1$. On the
other hand, since $P$ and $P^{-1}$ are symmetric,
\[
\begin{aligned}
\bigl((P\circ P^{-1})\mathbf{1}\bigr)_i
&=\sum_{j=1}^n p_{ij}(P^{-1})_{ij}\\
&=\sum_{j=1}^n p_{ij}(P^{-1})_{ji}\\
&=(PP^{-1})_{ii}=1.
\end{aligned}
\]
Thus $(P\circ P^{-1})\mathbf{1}=\mathbf{1},$ so $1\in\sigma(P\circ P^{-1})$. Consequently, $q(P\circ P^{-1})=1.$

Finally, $S^{-1}=D^{-1/2}A^{-1}D^{1/2},$ and hence $S\circ S^{-1}=D^{-1}(A\circ A^{-1})D.$
Thus $S\circ S^{-1}$ and $A\circ A^{-1}$ are similar and have the
same spectrum. Therefore
\[
q(A\circ A^{-1})
=q(S\circ S^{-1})
=q(P\circ P^{-1})
=1.\qedhere
\]
\end{proof}

The converse of Proposition~\ref{prop:PDS-H-equality} does not hold in general. Indeed, for the reducible matrix
$A=\operatorname{diag}(1,-1)$, which is a PDS invertible $\mathbf H$-matrix, we have
$A\circ A^{-1}=-I$ and hence $q(A\circ A^{-1})=1$, although the diagonal entries of $A$ have different signs. This example naturally leads us to consider the irreducible case. Recall that an $n\times n$ matrix $A$ is said to be \textit{reducible} if it is permutationally similar to a matrix of the form
$\begin{pmatrix} B&0\\ C&D \end{pmatrix}$,
where $B$ and $D$ are square matrices, or if $n=1$ and $A=0$. Otherwise, $A$ is said to be \textit{irreducible}. Remarkably, irreducibility rules out the behavior illustrated above and yields a complete characterization of the equality case.

\begin{thm}\label{thm:PDS-H-equality-characterization}
{\rm Let $A=(a_{ij})$ be an irreducible positive diagonally symmetrizable
invertible $\mathbf{H}$-matrix. Then
\[
q(A\circ A^{-1})=1
\quad\Longleftrightarrow\quad
a_{11},\ldots,a_{nn}\text{ have the same sign}.
\]}
\end{thm}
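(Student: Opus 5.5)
The reverse implication is Proposition~\ref{prop:PDS-H-equality}, so it remains to prove the forward direction. We argue by contraposition: assume $A$ is irreducible, PDS, an invertible $\mathbf{H}$-matrix, and that its diagonal entries do not all have the same sign. We will exhibit an eigenvalue of $A\circ A^{-1}$ of modulus strictly less than $1$.

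As in the proof of Proposition~\ref{prop:PDS-H-equality}, we first pass to the symmetric matrix $S=D^{-1/2}AD^{1/2}$. It is a symmetric invertible $\mathbf{H}$-matrix with $s_{ii}=a_{ii}$, it is irreducible (diagonal similarity preserves the zero pattern), and $S\circ S^{-1}$ is similar to $A\circ A^{-1}$. Let $E=\operatorname{diag}(\operatorname{sgn}(s_{ii}))$, and write $S=E\,P$ with $P=ES$, which has positive diagonal. The eigenvalues of $S$ are real and nonzero. Since the diagonal signs are mixed, $S$ is indefinite: by Lemma~\ref{lem:symmetric-H-eigenvalue} applied to the principal submatrix on the indices with positive diagonal (a symmetric invertible $\mathbf{H}$-matrix with positive diagonal), together with Cauchy interlacing, $S$ has both positive and negative eigenvalues.

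The key step is to find a vector $x$ with $x^T(S\circ S^{-1})x<x^Tx$; since $S\circ S^{-1}$ is symmetric, this gives an eigenvalue less than $1$. We still have to exclude the possibility that this eigenvalue lies in $(-1,0]$ with some other eigenvalue below $-1$, so the modulus must be controlled. A cleaner route is to use the known identity for symmetric invertible $S$: writing $S=U\Lambda U^T$, we have $S\circ S^{-1}=\sum_{k,l}\frac{\lambda_k}{\lambda_l}(u_k\circ u_l)(u_k\circ u_l)^T$. Hence
\[
x^T(S\circ S^{-1})x=\sum_{k,l}\frac{\lambda_k}{\lambda_l}\,c_{kl}^2,\qquad c_{kl}=x^T(u_k\circ u_l),
\]
and $c_{kl}$ is symmetric in $k,l$. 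Pairing the $(k,l)$ and $(l,k)$ terms gives coefficients $\frac{\lambda_k}{\lambda_l}+\frac{\lambda_l}{\lambda_k}$. This quantity is $\geq2$ when the two eigenvalues have the same sign and $\leq-2$ when they have opposite signs; for the diagonal terms $k=l$ it equals $1$. Taking $x=\mathbf 1$, orthonormality gives $c_{kl}=\delta_{kl}$, which recovers the eigenvalue $1$ with eigenvector $\mathbf 1$. We then work on $\mathbf 1^\perp$, which is invariant because $S\circ S^{-1}$ is symmetric. For $x\perp\mathbf 1$ with some $c_{kl}\neq0$ for a pair $k,l$ of opposite signs, the quadratic form can be made negative, which yields an eigenvalue in $(-\infty,0)$.

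To get an eigenvalue of modulus less than $1$, we would instead use the determinant and continuity. Consider the path $S_\tau=E\bigl(\operatorname{diag}(P)-\tau\,(\operatorname{diag}(P)-P)\bigr)$ for $\tau\in[0,1]$. Each $S_\tau$ is a symmetric invertible $\mathbf{H}$-matrix, since $\mathcal M(S_\tau)\geq\mathcal M(S)$ entrywise with the same diagonal. At $\tau=0$ we have $S_0\circ S_0^{-1}=I$. The matrix $S_\tau\circ S_\tau^{-1}$ is invertible along the whole path by Corollary~\ref{a circ ainv is h} together with Theorem~\ref{a hadamard b inv is h} (invertibility holds since $\mathcal M(S_\tau\circ S_\tau^{-1})$ is bounded below by an invertible $\mathbf{M}$-matrix). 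Its eigenvalues are real, and $\mathbf 1$ remains an eigenvector with eigenvalue $1$. Because of irreducibility, for small $\tau>0$ a second-order perturbation expansion shows that the eigenvalues on $\mathbf 1^\perp$ split: those tied to edges joining indices with opposite diagonal signs move below $1$, with derivative $\approx-2\tau^2 p_{ij}^2/(p_{ii}p_{jj})$. Irreducibility guarantees that such an edge exists. Since no eigenvalue can cross $0$ along the path, that branch lies in $(0,1)$ for small $\tau$.

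The main obstacle is to carry this from small $\tau$ all the way to $\tau=1$: an eigenvalue could in principle return to $1$. I would attempt to rule this out by showing that the eigenvalue $1$ of $S_\tau\circ S_\tau^{-1}$ has multiplicity exactly one for every $\tau\in(0,1]$, using irreducibility and the spectral formula above. If that fails, the fallback is to find the trace or determinant of the restriction to $\mathbf 1^\perp$ directly for the signed Perron structure.
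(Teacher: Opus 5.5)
Your argument does not prove the forward direction: the step that produces an eigenvalue of $A\circ A^{-1}$ of modulus $<1$ is never completed.

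The first route ends with a false claim. $C=S\circ S^{-1}$ is positive definite, so no nonzero $x$ makes $x^TCx$ negative. The numbers $c_{kl}=x^T(u_k\circ u_l)$ are not independent, and an opposite-sign pair cannot outweigh the remaining terms.

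Your own homotopy already proves $C\succ0$, although you use it only locally. Indeed $C_0=I$, and each $C_\tau$ is symmetric, continuous in $\tau$ and nonsingular on all of $[0,1]$. So no eigenvalue can leave $(0,\infty)$, and hence $C_1=C\succ0$. The paper gets this fact differently, from the block inverse: the diagonal of $S^{-1}$ has the same sign pattern as that of $S$. Hence $C$ is a symmetric invertible $\mathbf H$-matrix with positive diagonal, and Lemma~\ref{lem:symmetric-H-eigenvalue} applies.

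The second route only gives an eigenvalue below $1$ for small $\tau$. Carrying it to $\tau=1$ is exactly what you leave open, and the proposed simplicity of the eigenvalue $1$ of $C_\tau$ for all $\tau\in(0,1]$ is unsupported. As it stands, nothing excludes $\lambda_{\min}(C)=1$.

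What is missing is a single test vector at $\tau=1$; once $C\succ0$ is known, no eigenvalue tracking is needed. The paper uses the sign vector of the diagonal. After a permutation, write $S=\begin{pmatrix}S_1&F\\ F^T&-S_2\end{pmatrix}$. Here $S_1,S_2\succ0$ by Lemma~\ref{lem:symmetric-H-eigenvalue} applied to the principal blocks, and $F\neq0$ by irreducibility. Set $K=S_2+F^TS_1^{-1}F\succ0$. The block inverse gives $(S^{-1})_{12}=S_1^{-1}FK^{-1}$, hence $C_{12}=F\circ(S_1^{-1}FK^{-1})$. Using $C\mathbf 1=\mathbf 1$ and $z=(\mathbf 1^T,-\mathbf 1^T)^T$,
\[
z^TCz=\mathbf 1^TC\mathbf 1-4\,\mathbf 1^TC_{12}\mathbf 1
=n-4\operatorname{tr}\bigl(F^TS_1^{-1}FK^{-1}\bigr)
=n-4\bigl\|S_1^{-1/2}FK^{-1/2}\bigr\|_F^2<n=z^Tz .
\]
Thus $0<\lambda_{\min}(C)<1$, and since $C\succ0$, $q(A\circ A^{-1})=q(C)=\lambda_{\min}(C)<1$. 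Your homotopy positivity argument combined with this one computation would give a complete proof.
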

\begin{proof}
The sufficiency follows directly from Proposition~\ref{prop:PDS-H-equality}. We prove the necessity by contradiction.
Suppose that $q(A\circ A^{-1})=1$, but the diagonal entries of $A$ have mixed signs.

Since $A$ is positive diagonally symmetrizable, there exists a positive diagonal matrix $D$ such that $AD$ is symmetric, and set $S=D^{-1/2}AD^{1/2}$. Since $AD=DA^T$,
$$
S^T=D^{1/2}A^TD^{-1/2}=D^{-1/2}AD^{1/2}=S.
$$

Moreover, $\mathcal M(S)=D^{-1/2}\mathcal M(A)D^{1/2}$, so $S$ is an invertible $\mathbf H$-matrix. Positive diagonal similarity preserves irreducibility and the diagonal entries; hence $S$ is irreducible and also has mixed signs on its diagonal.
Since $S^{-1}=D^{-1/2}A^{-1}D^{1/2}$, we have
$$
S\circ S^{-1}=D^{-1}(A\circ A^{-1})D,
\qquad
q(S\circ S^{-1})=q(A\circ A^{-1})=1.
$$ 
After a simultaneous permutation of rows and columns, write
$$
S=\begin{pmatrix}P&\ \ E\\ E^T&-N\end{pmatrix},
$$
where both diagonal blocks are nonempty and $P,N$ have positive diagonal entries. Since $\mathcal M(P)$ and $\mathcal M(N)=\mathcal M(-N)$ are principal submatrices of the nonsingular $\mathbf M$-matrix $\mathcal M(S)$, they are nonsingular $\mathbf M$-matrices. Thus $P$ and $N$ are symmetric invertible $\mathbf H$-matrices with positive diagonal entries. By Lemma~\ref{lem:symmetric-H-eigenvalue},
$
P\succ0$ and $N\succ0.
$
Also, $E\neq0$, since otherwise $S$ would be reducible.

Set $
C=S\circ S^{-1},
$ and $
K=N+E^TP^{-1}E.
$
Since $P,N\succ0$, we have
$
K\succ0,$ $ P+EN^{-1}E^T\succ0.
$
Using the Schur complements of $P$ and $-N$, the block inverse formula gives
$$
S^{-1}=
\begin{pmatrix}
(P+EN^{-1}E^T)^{-1}&P^{-1}EK^{-1}\\
K^{-1}E^TP^{-1}&-K^{-1}
\end{pmatrix}.
$$

Hence the upper-left block of $S^{-1}$ is positive definite, whereas the lower-right block is negative definite. Thus the diagonal entries of $S^{-1}$ have the same signs as the corresponding diagonal entries of $S$. Consequently,
$$
c_{ii}=s_{ii}(S^{-1})_{ii}>0\qquad(i=1,\ldots,n).
$$

By the previously established result on Hadamard products involving inverses of invertible $\mathbf H$-matrices, $C=S\circ S^{-1}$ is an invertible $\mathbf H$-matrix. Since $S$ and $S^{-1}$ are symmetric, $C$ is symmetric. Therefore, by Lemma~\ref{lem:symmetric-H-eigenvalue}, $
C\succ0.
$ It follows that $q(C)=\lambda_{\min}(C)$. Since $q(C)=1$, we obtain
$
\lambda_{\min}(C)=1.
$

Symmetry of $S^{-1}$ gives
$$
(C\mathbf1)_i
=\sum_j s_{ij}(S^{-1})_{ij}
=\sum_j s_{ij}(S^{-1})_{ji}
=(SS^{-1})_{ii}=1.
$$
Thus
$
C\mathbf1=\mathbf1
$ and $
\mathbf1^TC\mathbf1=n.
$

Let $
z=\begin{pmatrix}\ \ \mathbf1\\-\mathbf1\end{pmatrix},
$ where the two blocks correspond to the positive and negative diagonal entries of $S$. Writing $C$  as
$$
C=\begin{pmatrix}C_{11}&C_{12}\\ C_{12}^T&C_{22}\end{pmatrix},
$$ we obtain $
z^TCz=n-4 (\mathbf1^TC_{12}\mathbf1).
$ From the block inverse formula, $
(S^{-1})_{12}=P^{-1}EK^{-1},
$ and therefore
$
C_{12}=E\circ(P^{-1}EK^{-1}).
$ 
 Hence
\[
\begin{aligned}
\mathbf1^TC_{12}\mathbf1
&=\sum_{i,j}(C_{12})_{ij}
=\sum_{i,j}E_{ij}(P^{-1}EK^{-1})_{ij}\\
&=\operatorname{tr}(E^TP^{-1}EK^{-1})
=\operatorname{tr}(E^TP^{-1}EK^{-1})\\
&=\operatorname{tr}\!\left(
K^{-1/2}E^TP^{-1}EK^{-1/2}
\right)\\
&=\operatorname{tr}\!\left[
(P^{-1/2}EK^{-1/2})^T
(P^{-1/2}EK^{-1/2})
\right]\\
&=\|P^{-1/2}EK^{-1/2}\|_F^2>0.
\end{aligned}
\]
Since $P,K\succ0$, the matrices $P^{-1/2}$ and $K^{-1/2}$ are nonsingular. Thus $E\neq0$ implies
$P^{-1/2}EK^{-1/2}\neq0$, which gives the strict inequality.
 
Since $z^Tz=n$, it follows that
$$
\frac{z^TCz}{z^Tz}
=1-\frac{4}{n}\|P^{-1/2}EK^{-1/2}\|_F^2<1.
$$

This contradicts $\lambda_{\min}(C)=1$, since the Rayleigh principle implies
$$
\frac{x^TCx}{x^Tx}\geq\lambda_{\min}(C)=1
$$
for every nonzero $x$. Therefore the diagonal entries of $A$ cannot have mixed signs. Since an invertible $\mathbf H$-matrix has nonzero diagonal entries, they must all have the same sign. This completes the proof.
\end{proof}

\section{Conclusion} \label{sec:conclusion}
In this paper, we have investigated spectral bounds for Hadamard products involving invertible $\mathbf{M}$-matrices, invertible $\mathbf{H}$-matrices, and their inverses. For invertible $\mathbf{M}$-matrices, we revisited the lower bound $\frac{2}{n}$ for $q(A\circ A^{-1})$ and completed the discussion of its sharpness. We also studied its relation to the Jacobi iteration matrix. In particular, while $q(A_k\circ A_k^{-1})\to\frac{2}{n}$ implies $\rho(J_{A_k})\to1$, the converse does not hold in general, and we identified a class for which it does hold.

We then extended the investigation to invertible $\mathbf{H}$-matrices. Although $A\circ A^{-1}$ remains an invertible $\mathbf{H}$-matrix, its minimum eigenvalue modulus can be arbitrarily close to zero. Thus, the positive dimension-dependent lower bound available for invertible $\mathbf{M}$-matrices does not extend to the full class of invertible $\mathbf{H}$-matrices. However, by replacing $A^{-1}$ with $\mathcal{M}(A)^{-1}$, meaningful lower bounds can be recovered. This yields bounds for $q(A\circ\mathcal{M}(A)^{-1})$, including bounds involving the Jacobi iteration matrix of the comparison matrix. We also established that $q(A_k\circ A_k^{-1})\to0$ implies $\rho(J_{\mathcal{M}(A_k)})\to1$.

Finally, motivated by the connection between Hadamard products and positive diagonal symmetrizability, we considered upper bounds in the $\mathbf{H}$-matrix setting. We showed that if $A$ is a positive diagonally symmetrizable invertible $\mathbf{H}$-matrix, then $1\in\sigma(A\circ A^{-1})$, and consequently $q(A\circ A^{-1})\leq1$. Moreover, in the irreducible case, equality holds if and only if the diagonal entries of $A$ have the same sign.

It is worth noting that all the lower-bound results obtained in this work remain valid when $A$ is a complex $\mathbf{H}$-matrix. The situation is different, however, for the upper bounds, where the underlying field plays an essential role. While the inequality $q(A\circ A^{-1})\leq1$ remains an open question for general invertible $\mathbf{H}$-matrices with real entries, it does not hold in general over the complex field. For example, $A=\begin{pmatrix}3&-1+i&-1+i\\-i&3&1+i\\-1-i&-i&3\end{pmatrix}$ is a complex invertible $\mathbf{H}$-matrix for which $q(A\circ A^{-1})\approx1.09552>1$. These results highlight both the similarities and the essential differences between the $\mathbf{M}$- and $\mathbf{H}$-matrix settings and lead naturally to several further questions.
\subsection*{Open Problems and Future Directions}\label{subsec:open-problems}
\addcontentsline{toc}{subsection}{Open Problems and Future Directions} 

The results of this paper suggest several natural questions for further investigation. One direction is to characterize subclasses of invertible $\mathbf H$-matrices for which $q(A\circ A^{-1})$ admits a positive lower bound and to determine the corresponding sharp bounds. It would also be interesting to obtain sharper estimates for $q(A\circ\mathcal M(A)^{-1})$ under additional structural assumptions on $A$.

Another natural question concerns the upper bound. In view of the complex counterexample discussed above, the problem is specific to the real setting: does every real invertible $\mathbf H$-matrix $A$ satisfy $q(A\circ A^{-1})\leq1$? If not, it would be interesting to characterize subclasses of real invertible $\mathbf H$-matrices for which this inequality holds.

Several questions also arise from the bounds involving Jacobi iteration matrices. In particular, one may characterize classes of invertible $\mathbf M$-matrices for which
$\rho(J_{A_k})\to1$
implies
$q(A_k\circ A_k^{-1})\to\frac{2}{n}$,
as well as classes of sequences of invertible $\mathbf H$-matrices for which
$\rho(J_{\mathcal M(A_k)})\to1$
implies
$q(A_k\circ A_k^{-1})\to0$.
Resolving these converse implications would further clarify the role of the Jacobi iteration matrix in these spectral bounds.

\vspace{0.5cm}

\section*{Acknowledgements}
\addcontentsline{toc}{section}{Acknowledgements} The authors thank Prof. K. C. Sivakumar for introducing this problem to them. The work of PIMS Postdoctoral Fellow S.\ Mondal leading to this publication was supported in part by the Pacific Institute for the Mathematical Sciences.

\end{document}